\newtheorem{theorem}{Theorem}[section]
\newtheorem{lemma}[theorem]{Lemma}
\newtheorem{corollary}[theorem]{Corollary}
\theoremstyle{definition}
\theoremstyle{remark}
\newtheorem{remark}[theorem]{Remark}
\numberwithin{equation}{section}
\newcommand{\on}[1]{\operatorname{#1}}
\newcommand{\un}[1]{\underline{#1}}
\newcommand{\mf}[1]{\mathfrak{#1}} 
\newcommand{\p}{\partial} 
\newcommand{\ds}{\oplus} 
\newcommand{\bds}{\bigoplus} 
\newcommand{\inj}{\hookrightarrow} 
\def\bbf{\mathbb{F}}
\def\ad{\mbox{\rm ad}\,}
\def\ggg{\mathfrak{g}}
\def\al{\alpha}
\def\fraka{\mathfrak{A}}
\def\veps{\epsilon}
\def\div{\text{\rm div}}
\begin{document}

\title[On structure of graded restricted simple Lie algebras of Cartan type]
{On structure of graded restricted simple Lie algebras of Cartan type as modules over the Witt algebra}

\subjclass[2010]{17B10; 17B50; 17B70}

\keywords{Lie algebras of Cartan type, Witt algebra, restricted modules, restricted baby Verma modules}

\thanks{This work is partially supported by the National Natural Science Foundation of China (Grant Nos. 11771279 and 12071136), the Fundamental Research Funds of Yunnan Province (Grant No. 2020J0375) and the Fundamental Research Funds of YNUFE (Grant No. 80059900196).}

\author{Ke Ou}
\address{School of Statistics and Mathematics, Yunnan University of Finance and Economics,
Kunming, 650221, China.}\email{keou@ynufe.edu.cn}

\author{Yu-Feng Yao}
\address{Department of Mathematics, Shanghai Maritime University, Shanghai, 201306, China.}\email{yfyao@shmtu.edu.cn}

\begin{abstract}
Any graded restricted simple Lie algebra of Cartan type contains a subalgebra isomorphic to the Witt algebra over a field of prime characteristic. As some analogue of study on branching rules for restricted non-classical Lie algebras,   it is shown that each graded restricted simple Lie algebra of Cartan type can be decomposed into a direct sum of restricted baby Verma modules and simple modules as an adjoint module over the Witt algebra. In particular, the composition factors are precisely determined.
\end{abstract}

\maketitle
\section{Introduction}
It is well known that in the late 1930's E. Witt firstly found a
non-classical simple Lie algebra over prime characteristic field
which is called the Witt algebra $W(1)$. This contributed to the study
of non-classical simple Lie algebras which
were called Cartan-type Lie algebras later. A well-known classification result on simple modular Lie algebras asserts that each finite dimensional (restricted) simple Lie algebra over a field of prime characteristic  $p>5$ is of either classical type or Cartan type (cf.
\cite{BW,PS}). There are four families of simple Lie algebras $X(n)$ of Cartan type $X$ for $X\in\{W,S,H,K\}$. They are subalgebras of derivation algebra of truncated polynomial algebras. Each simple Lie algebra of Cartan type contains a subalgebra isomorphic to the Witt algebra, which plays a similar role as the three dimensional simple Lie algebra $\mathfrak{sl}_2$ of type $A_1$ in classical simple Lie algebras. In view of this point,
the Witt algebra is a "fundamental" non-classical simple Lie algebra.

The representation theory of the Witt algebra $W(1)$ was firstly
studied by Ho-Jui Chang in the early 1940's (cf. \cite{Ch}). Its irreducible representations were completely determined. Based on this result, in the present paper we precisely determine the structure of graded restricted simple Lie algebras of Cartan type as adjoint modules over the Witt algebra. It is shown that any graded restricted simple Lie algebra of Cartan type can be decomposed as a direct sum of restricted baby Verma modules and simple modules over the Witt algebra. As a consequence, the composition factors are precisely determined. We hope the study on the decomposition of $ X(n) $ as a $ W(1) $-modules will provide some useful and interesting intrinsic observation on the structure of irreducible restricted $ X(n) $-modules and  branching rules in resticted representation category for $X(n)$, where $X\in \{W,S,H,K\} $.

This paper is organized as follows. In section 2, we introduce some basic concepts on restricted Lie algebras and their (restricted) representations, and the algebra structure on graded Lie algebras of Cartan type. In particular, we present precisely the embedding of the Witt algebra to the four families of Lie algebras of Cartan type. Moreover, the restricted representation theory of the Witt algebra is recalled. Section 3 is devoted to studying the decomposition of the Jacobson-Witt algebra as a module over the Witt algebra into a direct sum of submodules. In section 4, we first precisely give a basis for the special algebra. By using this basis, we decompose the special algebra into a direct sum of restricted baby Verma modules and simple modules over the Witt algebra. Sections 5 and 6 are devoted to determining the decomposition of the Hamiltonian algebra and the contact algebra as direct sums of restricted baby Verma modules and simple modules over the Witt algebra, respectively.

\section{Preliminaries}
Throughout this paper, $\mathbb{F}$ is assumed to be an
algebraically closed field of prime characteristic $p>2$. All
modules (vector spaces) are over $\mathbb{F}$ and finite-dimensional. Set $I=\{0,1,\cdots,p-1 \}. $ For a finite set $S$, let $|S|$ denote the number of elements in $S$. For a Lie algebra $\ggg$, let $U(\ggg)$ be its universal enveloping algebra, and $Z(\ggg)$ be the center of $U(\ggg)$. For a $\ggg$-module $M$, let $[M]$ be the formal sum of all composition factors of $M$.
\subsection{Restricted Lie algebras and their irreducible representations}
Recall that a restricted Lie algebra $\ggg$ over $\bbf$ is a Lie algebra with a so-called
restricted mapping $[p]: \ggg\rightarrow \ggg$ sending $x\mapsto
x^{[p]}$  satisfying that $ \ad (x^{[p]})=(\ad x)^p$ and that
$\xi:\ggg\rightarrow Z(\ggg)$ sending $x\mapsto x^p-x^{[p]}$ is semi-linear.

For a restricted Lie algebra $(\ggg, [p])$ and a simple $\ggg$-module $M$, since $x^p-x^{[p]}\in Z(\ggg)$ for any $x\in\ggg$, the element $x^p-x^{[p]}$ must act as a scalar, denoted by $\chi(x)^p$. The semilinearity of $\xi$ implies that $\chi\in\ggg^*$. In general, a $\ggg$-module $M$ is said to be
$\chi$-reduced if $x^p\cdot v-x^{[p]}\cdot v=\chi(x)^pv$ for all
$x\in\ggg, v\in M$. In particular, it is called a
restricted module if $\chi=0$. Let $U_{\chi}(\ggg)$ be the quotient of the universal
enveloping algebra $U(\ggg)$ by the
ideal generated by $\{x^p-x^{[p]}-\chi(x)^p\mid
x\in\ggg\}$ which is called a $\chi$-reduced enveloping algebra of $\ggg$, i.e.,
$U_{\chi}(\ggg)=U(\ggg)/(x^p-x^{[p]}-\chi(x)^p\mid
x\in\ggg_{\bar{0}})$. If $\chi=0$, the algebra $U_0(\ggg)$ is
called the restricted enveloping algebra and denoted by
$u(\ggg)$ for brevity. All the $\chi$-reduced (resp. restricted)
$\ggg$-modules constitute a full subcategory of the $\ggg$-module
category, which coincides with the $U_{\chi}(\ggg)$ (resp.
$u(\ggg)$)-module category. Each simple $\ggg$-module is a
$U_{\chi}(\ggg)$-module for a unique $\chi\in\ggg^*$.

\subsection{Graded Lie algebras of Cartan type}\label{section Cartan type lie algebras}

Fix a positive integer $n$. Denote by $A(n)$ the index set
$\{\al=(\al_1,\cdots,\al_n)\mid 0\leq \al_i\leq p-1,
i=1,2,\cdots,n\}$, and denote
$(p-1,\cdots,p-1)$ by $\tau$ for brevity. We have a truncated polynomial algebra
$\fraka(n)$ which is by definition a commutative associative
algebra with a basis $\{x^\al\mid \al\in A(n)\}$, and
multiplication subject to the following rule
\begin{equation*}\label{multiplication rule}
x^\al x^\beta=x^{\alpha+\beta},\quad\forall\,\al,\beta\in A(n),
\end{equation*}
additionally with
$$x^\alpha=0 {\text{ if }} \alpha\notin A(n);\;\;\;
x_i:=x^{\veps_i} {\text{ for }}
\veps_i=(\delta_{1,\,i},\cdots,\delta_{n,\,i}).$$
There is a natural graded structure on $\fraka(n)$, and
consequently a filtered structure there. The gradation and
filtration of $\fraka(n)$ induce the corresponding ones on
the so-called Jacobson-Witt algebra $W(n)$, which is the derivation algebra of $\fraka(n)$. Then $W(n)$ is free $\fraka(n)$-module with a basis $\{\partial_1,\cdots, \partial_n\}$, where $\partial_i(x_j)=\delta_{ij}, 1\leq i, j\leq n$. For more details, the readers are referred to the reference
\cite{SF, St}.

We can get other three series of subalgebras in $W(n)$,
which are called (graded) Cartan type Lie algebras of series $S,H$,
and $K$  respectively, arising from the three exterior differentials
$\omega_S,\omega_H,\omega_K$. Below, we recall the definitions, and
cite some basic notations and facts we need later. The definitions
here will be given by using some operators (cf. \cite[Chapter
4]{St}), instead of using the original differential forms (cf.
\cite{KS}).

Set $\widetilde{S}(n)=\{D\in W(n)\mid \div
(D)=0\}$, where $\div (\sum f_i\partial_i)=\sum \partial_i(f_i)$ for
any $\sum f_i\partial_i\in W(n)$. Then by definition, the derived
algebra of $\widetilde{S}(n)$ is called the special algebra
$S(n)$, i.e. $S(n)=[\widetilde{S}(n),
\widetilde{S}(n)]$.

The Hamiltonian algebra is by definition
$H(2r)=\bbf$-span$\{D_{H}(x^{\al})\mid 0\prec\al\prec\tau\}$. Here
$D_H$ is the Hamiltonian operator from $\fraka(2r)$ to
$W(2r)$ defined as follows:
$$
\aligned D_{H}:\quad\quad \fraka(2r) &\longrightarrow
W(2r)\cr f &\longmapsto
D_{H}(f)=\sum\limits_{i=1}^{2r}\sigma(i)\partial_i(f)\partial_{i^{\prime}}
\endaligned
$$
where
\[
\sigma(i):=\left\{
\begin{array}{ll}
1,& \text{if}\quad 1\leq i\leq r, \\
-1,& \text{if}\quad r+1\leq i\leq 2r,
\end{array}
\right.
\]
and
\[
i^{\prime}:=\left\{
\begin{array}{ll}
i+r,& \text{if}\quad 1\leq i\leq r, \\
i-r,& \text{if}\quad r+1\leq i\leq 2r.
\end{array}
\right.
\]

Set $\widetilde{K}(2r+1)=\bbf$-span$\{D_K(x^{\al})\mid
\al\in A(2r+1)\}$, where the contact operator $D_K$ from
$\fraka(2r+1)$ to $W(2r+1)$ is defined as follows:
$$
\aligned D_{K}:\quad\quad \fraka(2r+1) &\longrightarrow
W(2r+1)\cr f &\longmapsto
D_{K}(f)=\sum\limits_{i=1}^{2r+1}f_i\partial_i
\endaligned
$$
where
$$ f_j=x_j\partial_{2r+1}(f)+\sigma(j^{\prime})\partial_{j^{\prime}}(f),\,\,j\leq 2r,$$
$$f_{2r+1}=2f-\sum\limits_{i=1}^{2r}\sigma(j)x_jf_{j^{\prime}}.$$
The contact algebra $K(2r+1)$ is the derived algebra of
$\widetilde{K}(2r+1)$, i.e.
$K(2r+1)=[\widetilde{K}(2r+1), \widetilde{K}(2r+1)]$.

Both subalgebras $S(n)$ and $H(n)$ naturally inherit the
graded and filtered structure of $W(n)$. But the contact algebra
$K(2r+1)$ is not a graded subalgebra of $W(2r+1)$. One can
define a new gradation on $K(2r+1)$ which is not inherited from
the gradation of $W(2r+1)$. For that, define
$||\al||=\sum_{i=1}^{2r+1}\al_i+\al_{2r+1}-2$ for $\al\in
A(2r+1)$ and $K(2r+1)_{[i]}=\bbf$-span$\{D_K(x^{\al})\mid
||\al||=i\}$. Then $K(2r+1)=\bigoplus_{i\geq
-2}K(2r+1)_{[i]}$ is a gradation of $K(2r+1)$. Associated
with this gradation, one can also obtain the corresponding
filtration
$$K(2r+1)=K(2r+1)_{-2}\supset
K(2r+1)_{-1}\supset\cdots$$ where
$K(2r+1)_i=\bigoplus_{j\geq i}K(2r+1)_{[j]}.$

Let $L=X(n)$, $X\in\{W,S,H,K\}$. Then $L$ is a restricted Lie algebra with the restricted $[p]$-mapping given by taking the $p$-th power of derivations. Moreover, $L$ has a $\mathbb{Z}$-grading $L=\oplus_{i\geq -1-\delta_{XK}}L_{[i]}$, associated with which there is a natural filtration $L=L_{-1-\delta_{XK}}\supset L_{0-\delta_{XK}}\supset\cdots$ with $L_{j}=\oplus_{i\geq j}L_{[i]}$ for $j\geq -1-\delta_{XK}$.

\subsection{Embeddings of the Witt algebra to restricted Lie algebras of Cartan type} Define the linear mappings from the Witt algebra to restricted Lie algebras of Cartan type as follows.
$$
\aligned \Theta_{W}:\quad\quad W(1) &\longrightarrow
W(n)\cr x_1^i\partial_1 &\longmapsto x_1^i\partial_1,\quad \forall\,0\leq i\leq p-1,\cr\\
\Theta_{S}:\quad\quad W(1) &\longrightarrow
S(n)\cr x_1^i\partial_1 &\longmapsto D_{12}(x_1^ix_2)=x_1^i\partial_1-ix_1^{i-1}x_2\partial_2,\quad \forall\,0\leq i\leq p-1,\cr\\
\Theta_{H}:\quad\quad W(1) &\longrightarrow
H(2r)\cr x_1^i\partial_1 &\longmapsto -D_H(x_1^ix_{r+1})=x_1^i\partial_1-ix_1^{i-1}x_{r+1}\partial_{r+1},\quad \forall\,0\leq i\leq p-1,\cr\\
\Theta_{K}:\quad\quad W(1) &\longrightarrow
K(2r+1)\cr x_1^i\partial_1 &\longmapsto \frac{1}{2}D_K(x_{2r+1}^i),\quad \forall\,0\leq i\leq p-1.
\endaligned
$$
The following result asserts that the Witt algebra is a restricted subalgebra of any restricted Lie algebra of Cartan type. The proof follows from straightforward computation, we omit the details.
\begin{lemma}\label{embedding}
Keep notations as above. Then $\Theta_{X}$ is an injective restricted Lie algebra homomorphism from the Witt algebra $W(1)$ to the restricted Lie algebra $X(n)$ of Cartan type $X$ for $X\in\{W,S,H,K\}$.
\end{lemma}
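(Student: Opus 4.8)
The plan is to verify, for each $X\in\{W,S,H,K\}$, three things: that $\Theta_X$ maps into the target algebra and preserves brackets, that it is injective, and that it intertwines the two $[p]$-operations. Throughout I write $e_i:=x_1^i\partial_1$ for the standard basis of $W(1)$, so that its multiplication and restricted structure read
\begin{equation*}
[e_i,e_j]=(j-i)e_{i+j-1},\qquad e_i^{[p]}=\delta_{i,1}\,e_1,
\end{equation*}
with the convention $e_k=0$ for $k\notin I$. The case $X=W$ is immediate, as $\Theta_W$ is just the inclusion of derivations in the variable $x_1$; I record it only to fix conventions. One should also note in passing that each image does land in the asserted Cartan type algebra (for $K$ this uses that $D_K(x_{2r+1}^i)$ lies in the derived algebra $[\widetilde{K},\widetilde{K}]$).

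For the bracket relations I would compute directly from $[f\partial_a,g\partial_b]=f\partial_a(g)\partial_b-g\partial_b(f)\partial_a$. The series $S$ and $H$ can be treated at once, since in both cases $\Theta_X(e_i)$ has the shape $x_1^i\partial_1-i\,x_1^{i-1}y\,\partial_y$ for a single auxiliary variable $y$ (namely $y=x_2$ for $S$ and $y=x_{r+1}$ for $H$); expanding $[\Theta_X(e_i),\Theta_X(e_j)]$ into four summands one finds the bracket of the two $\partial_y$-terms vanishes while the cross terms combine to $(i-j)(i+j-1)x_1^{i+j-2}y\,\partial_y$, which with the leading Witt bracket reproduces $(j-i)\Theta_X(e_{i+j-1})$. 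For $X=K$ I would first evaluate the contact operator explicitly, using $\sum_{j=1}^{2r}\sigma(j)x_jx_{j'}=0$ to obtain
\begin{equation*}
\Theta_K(e_i)=\tfrac12 D_K(x_{2r+1}^i)=x_{2r+1}^i\partial_{2r+1}+\tfrac{i}{2}x_{2r+1}^{i-1}E,\qquad E:=\textstyle\sum_{j=1}^{2r}x_j\partial_j,
\end{equation*}
and then bracket these, using that the Euler field $E$ commutes with $x_{2r+1}$ and $\partial_{2r+1}$; the same cancellation yields $(j-i)\Theta_K(e_{i+j-1})$. Injectivity is free in every case: the $\partial_1$-component of $\Theta_X(e_i)$ is $x_1^i$ (resp.\ the $\partial_{2r+1}$-component is $x_{2r+1}^i$ for $K$), and these are linearly independent in $\fraka(n)$.

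The restricted compatibility $\Theta_X(x^{[p]})=\Theta_X(x)^{[p]}$ is the part that needs an idea rather than brute force. Here I would exploit that the defect $\delta(x):=\Theta_X(x^{[p]})-\Theta_X(x)^{[p]}$ is $p$-semilinear, that is additive and $\lambda^p$-homogeneous: the Jacobson formula expresses $(x+y)^{[p]}$ through $x^{[p]},y^{[p]}$ and iterated brackets, all of which $\Theta_X$ already respects, so the bracket contributions cancel and $\delta(x+y)=\delta(x)+\delta(y)$, while $\delta(\lambda x)=\lambda^p\delta(x)$ is clear. Hence it suffices to check $\delta(e_i)=0$ on the basis, i.e.\ to compute the $p$-th powers of the explicit derivations above. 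The uniform mechanism is that each $\Theta_X(e_i)$ sends a monomial to a scalar multiple of the monomial obtained by shifting the exponent of one distinguished variable ($x_1$ for $W,S,H$; $x_{2r+1}$ for $K$) by $i-1$: for $i=1$ it is toral, so its $p$-th power equals itself by Fermat, matching $e_1^{[p]}=e_1$; for $i=0$ it is a single partial derivative with vanishing $p$-th power; and for $2\le i\le p-1$ the positive shift forces the distinguished exponent past $p-1$ after $p$ iterations, giving $0$. This yields $\Theta_X(e_i)^{[p]}=\Theta_X(e_i^{[p]})$ for all $i$, hence $\delta\equiv0$.

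I expect the main obstacle to be the $K$-series: the contact embedding is the least transparent, so the evaluation of $D_K(x_{2r+1}^i)$ and the bookkeeping in the bracket require care, in particular the boundary indices $i+j-1\ge p$, where the truncation $x_{2r+1}^{i+j-1}=0$ must conspire with the vanishing of the coefficient $i+j-1\equiv0\pmod p$. The semilinearity reduction is what keeps the $[p]$-verification from degenerating into a genuine computation of $\big(\sum_i c_ie_i\big)^{[p]}$, and the monomial-action description is what makes the $p$-th powers transparent across all four series simultaneously.
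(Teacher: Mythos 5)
Your proposal is correct and fills in exactly the ``straightforward computation'' that the paper omits: direct verification of the bracket relation $[\Theta_X(e_i),\Theta_X(e_j)]=(j-i)\Theta_X(e_{i+j-1})$ (including the boundary cancellation at $i+j-1=p$), injectivity via the $\partial_1$- resp.\ $\partial_{2r+1}$-components, and the $[p]$-compatibility. The one genuinely organizing idea you add beyond brute force --- reducing the restrictedness check to basis elements via the $p$-semilinearity of the defect $\Theta_X(x^{[p]})-\Theta_X(x)^{[p]}$, which follows from the Jacobson formula once the bracket relation is known --- is sound and is the standard way to make that verification finite; the explicit evaluation $\tfrac12 D_K(x_{2r+1}^i)=x_{2r+1}^i\partial_{2r+1}+\tfrac{i}{2}x_{2r+1}^{i-1}\sum_{j=1}^{2r}x_j\partial_j$ also agrees with the formula the paper uses later in Section 6.
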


\begin{remark}
Thanks to Lemma \ref{embedding}, any Lie algebra of Cartan type is a restricted module over the Witt algebra under the adjoint action. We will determine this module structure in the sequel sections.
\end{remark}

\subsection{Restricted representations of the Witt algebra $W(1)$}\label{witt rep} In this subsection, we always assume that $\ggg=W(1)$ is the Witt algebra over $\bbf$. We will recall the known classification results on simple $\ggg$-modules given by Chang in \cite{Ch}. Recall that $\ggg$ has a natural $\mathbb{Z}$-gradation $\ggg=\oplus_{i=-1}^{p-2}\ggg_{[i]}$, associated with which there is a filtration $\ggg=\ggg_{-1}\supset\ggg_0\cdots\supset\ggg_{p-2}\supset 0$, where $\ggg_i=\oplus_{j\geq i}\,\ggg_{[j]}$ for $-1\leq i\leq p-2$. For any $\lambda\in I$, let $\bbf_{\lambda}$ be the one dimensional restricted $\ggg_{[0]}$-module given by multiplication by the scalar $\lambda$. Then we can regard $\bbf_{\lambda}$ as a restricted $\ggg_0$-module with trivial action by $\ggg_1$. Let $V(\lambda)=u(\ggg)\otimes_{u(\ggg_0)}\bbf_{\lambda}$ which is called a restricted baby Verma $\ggg$-module. Each restricted baby Verma $\ggg$-module $V(\lambda)$ has a unique simple quotient denoted by $L(\lambda)$ for $\lambda\in I$. Then the set $\{L(\lambda)\mid \lambda\in I\}$ exhausts all non-isomorphic irreducible restricted $\ggg$-modules. Moreover, $L(\lambda)=V(\lambda)$ if and only if $0<\lambda<p-1$. While both $V(0)$ and $V(p-1)$ have two composition factors $L(0)$ and $L(p-1)$. The natural module $A(1)$ is isomorphic to $V(p-1)$, while the adjoint module $W(1)$ is isomorphic to $L(p-2)$.

%
%

\section{Structure of the Jacobson-Witt algebras as modules over the Witt algebra}
In this section, we study the structure of the Jacobson-Witt algebra $W(n)$ as a module over the Witt algebra $W(1)$. For that, denote $ x^{\un{i}}=x_2^{i_2}\cdots x_n^{i_n} $ for any $ \un{i}=(i_2,\cdots,i_n)\in I^{n-1}$. Then
\begin{equation}\label{w decom}
 W(n)=\bigoplus_{j=1}^n \bigoplus_{\un{i}\in I^{n-1}} A(1)x^{\un{i}}\p_j.
\end{equation}
Moreover, each summand in (\ref{w decom}) is a $W(1)$-module. More precisely, for each $ \un{i}\in I^{n-1} $, we have the following isomorphism as modules over the Witt algebra $W(1)$,
$$A(1)x^{\un{i}}\p_j\cong\begin{cases}
W(1), &\text{ if } j=1,\cr
A(1), & \text{ if } 2\leq j\leq n.
\end{cases}
$$
Consequently, we have
\begin{theorem}
As a module over the Witt algebra $W(1)$, we have
\begin{equation}\label{w-1}
W(n)\cong V(p-1)^{\oplus(n-1)p^{n-1}}\oplus L(p-2)^{\oplus p^{n-1}}.
\end{equation}
Hence,
\begin{equation}\label{w-2}
[W(n)]=(n-1)p^{n-1}\left([L(0)]+[L(p-1)]\right)+p^{n-1}[L(p-2)].
\end{equation}
\end{theorem}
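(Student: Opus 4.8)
The plan is to deduce the theorem from the direct sum decomposition \eqref{w decom} together with the summandwise isomorphisms recorded just above it; thus the work splits into two parts: (i) verifying that each space $A(1)x^{\un{i}}\p_j$ is a $W(1)$-submodule and identifying its isomorphism type, and (ii) a bookkeeping count followed by passage to composition factors.

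First I would make the adjoint action explicit. For a generator $x_1^k\p_1\in W(1)$ and a basis vector $x_1^a x^{\un{i}}\p_j$ of $W(n)$, a direct bracket computation in $W(n)$ gives
$$[x_1^k\p_1,\,x_1^a x^{\un{i}}\p_j]=\begin{cases}(a-k)\,x_1^{a+k-1}x^{\un{i}}\p_1,&j=1,\\ a\,x_1^{a+k-1}x^{\un{i}}\p_j,&2\leq j\leq n.\end{cases}$$
In both cases the factors $x^{\un{i}}$ and $\p_j$ are preserved, so each $A(1)x^{\un{i}}\p_j$ is indeed $W(1)$-stable and \eqref{w decom} is a decomposition into $W(1)$-submodules.

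Next I would match the two coefficient patterns with standard $W(1)$-modules. For $j=1$ the rule $x_1^k\p_1\cdot(x_1^a x^{\un{i}}\p_1)=(a-k)x_1^{a+k-1}x^{\un{i}}\p_1$ coincides with the adjoint action of $W(1)$ on itself under the assignment $x_1^a x^{\un{i}}\p_1\mapsto x_1^a\p_1$, whence $A(1)x^{\un{i}}\p_1\cong W(1)\cong L(p-2)$ by the facts recalled in Subsection~\ref{witt rep}. For $2\leq j\leq n$ the rule $x_1^k\p_1\cdot(x_1^a x^{\un{i}}\p_j)=a\,x_1^{a+k-1}x^{\un{i}}\p_j$ coincides with the action of $W(1)$ on the natural module $A(1)$ under $x_1^a x^{\un{i}}\p_j\mapsto x_1^a$, whence $A(1)x^{\un{i}}\p_j\cong A(1)\cong V(p-1)$.

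Finally I would count and pass to composition factors. As $\un{i}$ runs over $I^{n-1}$ there are $p^{n-1}$ summands with $j=1$, each isomorphic to $L(p-2)$, and $(n-1)p^{n-1}$ summands with $2\leq j\leq n$, each isomorphic to $V(p-1)$; this gives \eqref{w-1}. Passing to the formal sums of composition factors and using $[V(p-1)]=[L(0)]+[L(p-1)]$ together with the simplicity of $L(p-2)$ then yields \eqref{w-2}. I expect no genuine obstacle here: the computation is routine, and the only point demanding care is to read off the coefficient patterns $(a-k)$ and $a$ correctly, so as to invoke the adjoint and natural cases of Chang's classification respectively.
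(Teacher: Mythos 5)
Your proposal is correct and follows essentially the same route as the paper: it relies on the decomposition \eqref{w decom} into the submodules $A(1)x^{\un{i}}\p_j$, identifies them with $L(p-2)$ (for $j=1$) and $V(p-1)\cong A(1)$ (for $j\geq 2$), and then counts. The only difference is that you spell out the bracket computations that the paper leaves implicit, and these are verified to be correct.
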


\begin{proof}
Since $A(1)\cong V(p-1)$ and $W(1)\cong L(p-2)$ as $W(1)$-modules, (\ref{w-1}) follows. Furthermore, note that the restricted baby Verma module $V(p-1)$ has two composition factors $L(0)$ and $L(p-1)$. This together with (\ref{w-1}) yields (\ref{w-2}).
\end{proof}

\section{Structure of the special algebras as modules over the Witt algebra}
In this section, we study the structure of the special algebra $S(n)$ as a module over the Witt algebra $W(1)$. For that, for each $ \un{a}=(a_1,\cdots,a_n)\in I^n, $  we define $$ x^{\un{a}}=x_1^{a_1}\cdots x_n^{a_n},\,\,\,\Omega(\un{a})=\{i\mid a_i\neq p-1\}\,\,\text{and}\,\,\ell(\un{a})=|\Omega(\un{a})|.$$  In the following, if $ \Omega(\un{a})=\{ i_1,\cdots,i_s \}, $ we always assume that $ 1\leq i_1<\cdots<i_s\leq n. $

\subsection{Basis of $ S(n) $} In this subsection, we give a basis of $S(n)$. This may be known for experts. However, we do not find it in literature.

Set
$$\mf{B}_1=\{ x^{\un{a}}\p_i\mid \un{a}\in I^n, 1\leq i\leq n,\text{ and } a_i=0, a_j\neq p-1 \text{ for some }j\neq i \}. $$
and
$$\mf{B}_2= \left\{ D_{i_ji_{j+1}}(x^{\un{a}}x_{i_j}x_{i_{j+1}}) \mid \un{a}\in I^n \text{ with } \Omega(\un{a})=\{i_1,\cdots,i_s\}, 1\leq j\leq s-1 \right\}.$$

Let $V_1$ be the linear subspace of $S(n)$ spanned by elements in $\mf{B}_1$, and let $V_2$ be the linear subspace of $S(n)$ spanned by $D_{ij}(x^{\un{a}})$ for $\un{a}\in I^n$ with $a_i\neq 0$ and  $a_j\neq 0$, $1\leq i<j\leq n$. Note that $ D_{ij}(x^{\un{b}})\in V_2 $ if and only if $ x^{\un{b}}=x^{\un{a}}x_ix_j $ for some $ \un{a}\in I^n $ such that $ a_i,a_j\neq p-1. $

We have the following basic observation.
\begin{lemma}\label{3.1}
Keep notations as above. Then the following statements hold.
\begin{itemize}
\item[(1)]	$ S(n)=V_1\ds V_2. $
\item[(2)] $\mf{B}_1$ is a basis of $V_1$.
\item[(3)] $\dim(V_1)=n(p^{n-1}-1)$, $\dim(V_2)=np^{n-1}(p-1)+1$.
\end{itemize}
\end{lemma}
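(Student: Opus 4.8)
The plan is to place both $V_1$ and $V_2$ inside the standard spanning set of $S(n)$ and to separate them by a single coordinate condition. Recall that for $f\in\fraka(n)$ and $i\neq j$ one has $D_{ij}(f)=\p_j(f)\p_i-\p_i(f)\p_j$, so $\div(D_{ij}(f))=0$, and that by the classical description of the special algebra (cf.\ \cite{SF,St}) $S(n)=\bbf\text{-span}\{D_{ij}(x^{\un a})\mid 1\le i<j\le n,\ \un a\in A(n)\}$. I would first record the explicit formula $D_{ij}(x^{\un a})=a_j\,x^{\un a-\veps_j}\p_i-a_i\,x^{\un a-\veps_i}\p_j$ and sort these generators by which of $a_i,a_j$ vanish: if $a_i=a_j=0$ the generator is $0$; if exactly one of them is nonzero it is a single monomial field $x^{\un c}\p_k$ with vanishing $k$-th exponent; and if $a_i,a_j\neq 0$ it is precisely a two-term element of the type spanning $V_2$.

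This trichotomy yields statement (1). For $V_1\se S(n)$, any $x^{\un a}\p_i\in\mf{B}_1$ satisfies $a_i=0$ and $a_{j_0}\neq p-1$ for some $j_0\neq i$, and then $D_{ij_0}(x^{\un a+\veps_{j_0}})=(a_{j_0}+1)\,x^{\un a}\p_i$ with $a_{j_0}+1\neq 0$ in $\bbf$, so $x^{\un a}\p_i\in S(n)$; the inclusion $V_2\se S(n)$ is immediate. The same trichotomy shows every generator $D_{ij}(x^{\un a})$ lies in $V_1+V_2$, giving $S(n)=V_1+V_2$. For directness I would write $W(n)=U_0\ds U_1$, where $U_0$ and $U_1$ are spanned by the basis fields $x^{\un c}\p_k$ with $c_k=0$, respectively $c_k\neq 0$. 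The explicit formula shows $V_1\se U_0$, while in every spanning element of $V_2$ both monomial fields have nonzero $\p$-index exponent, so $V_2\se U_1$; hence $V_1\cap V_2=0$ and $S(n)=V_1\ds V_2$.

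Statement (2) is routine: $\mf{B}_1$ spans $V_1$ by definition and, being a subset of the monomial basis $\{x^{\un c}\p_k\}$ of $W(n)$, is linearly independent. Counting is immediate: for each $i$ there are $p^{n-1}$ tuples with $a_i=0$, and exactly one of them (all other entries equal to $p-1$) is excluded, so $|\mf{B}_1|=n(p^{n-1}-1)=\dim V_1$. For (3) it remains to find $\dim V_2$, which by the direct sum equals $\dim S(n)-\dim V_1=(n-1)(p^n-1)-n(p^{n-1}-1)$, using the classical value $\dim S(n)=(n-1)(p^n-1)$; this simplifies to $\dim V_2$ and, as a check, coincides with the count $|\mf{B}_2|=n(p-1)p^{n-1}-(p^n-1)$ obtained below.

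Finally, to exhibit $\mf{B}_2$ as an explicit basis of $V_2$ — the real purpose of the subsection and the main obstacle — I would show that $\mf{B}_2$ already spans $V_2$, i.e.\ that any $D_{ik}(x^{\un a+\veps_i+\veps_k})$ with $i,k\in\Omega(\un a)$ reduces to generators indexed by \emph{adjacent} pairs of $\Omega(\un a)$. The tool is the cocycle identity $D_{jk}(\p_ig)-D_{ik}(\p_jg)+D_{ij}(\p_kg)=0$, valid for all $g\in\fraka(n)$ and all $i,j,k$ (a direct verification). Picking an index $m$ with $i<m<k$ and $m\in\Omega(\un a)$ and taking $g$ proportional to $x^{\un a+\veps_i+\veps_k+\veps_m}$, this identity expresses $D_{ik}(x^{\un a+\veps_i+\veps_k})$ through $D_{mk}(x^{\un a+\veps_m+\veps_k})$ and $D_{im}(x^{\un a+\veps_i+\veps_m})$ — the same $\un a$, but with strictly fewer elements of $\Omega(\un a)$ between the two endpoints — so induction on that number terminates in $\mf{B}_2$. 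Linear independence of $\mf{B}_1\cup\mf{B}_2$ I would obtain by a leading-term argument: ordering monomial fields by their $\p$-index, the leading term of $D_{i_ji_{j+1}}(x^{\un a}x_{i_j}x_{i_{j+1}})$ is its $\p_{i_{j+1}}$-component $-(a_{i_j}+1)\,x^{\un a+\veps_{i_{j+1}}}\p_{i_{j+1}}$, from which $(\un a,j)$ can be read off; these leading terms are pairwise distinct and lie in $U_1$, hence are disjoint from those of $\mf{B}_1$, which lie in $U_0$. Distinct leading terms force independence, so $\mf{B}_2$ is a basis of $V_2$ and $\mf{B}_1\cup\mf{B}_2$ a basis of $S(n)$, reconfirming (3).
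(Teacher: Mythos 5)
Your argument follows essentially the same route as the paper's: decompose the generators $D_{ij}(x^{\un{a}})$ according to which of $a_i,a_j$ vanish, observe that the one-term generators land in $\mf{B}_1$ and the two-term ones span $V_2$, read off linear independence of $\mf{B}_1$ from the monomial basis of $W(n)$, and obtain $\dim V_2$ by subtracting from $\dim S(n)=(n-1)(p^n-1)$. Your justification of $V_1\cap V_2=\{0\}$ via the splitting $W(n)=U_0\ds U_1$ according to whether the exponent matching the $\p$-index vanishes is more explicit than the paper's ``obvious''; it is correct and worth keeping. The last part of your write-up (the cocycle identity $D_{jk}(\p_ig)-D_{ik}(\p_jg)+D_{ij}(\p_kg)=0$ together with the leading-term argument showing $\mf{B}_2$ is a basis of $V_2$) is not needed for this lemma --- it is the content of Corollary \ref{cor for S} --- but it is a genuinely different and arguably cleaner derivation than the paper's, which instead expands a circulant-type determinant in Lemma \ref{v_2} to get the spanning statement and then compares cardinalities via the combinatorial identities of Lemma \ref{equation}.

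One point you should make explicit rather than hide behind ``this simplifies to $\dim V_2$'': the number you actually compute, namely $(n-1)(p^n-1)-n(p^{n-1}-1)=np^{n-1}(p-1)-p^n+1=n(p-1)p^{n-1}-(p^n-1)$, is \emph{not} the value $np^{n-1}(p-1)+1$ asserted in item (3). Your arithmetic is right: for $n=2$, $p=3$ one gets $\dim S(2)=8$, $\dim V_1=4$, hence $\dim V_2=4$, whereas $np^{n-1}(p-1)+1=13$ would exceed $\dim S(2)$. So the discrepancy is a misprint in the statement (and in the displayed computation of the paper's own proof); the corrected value $np^{n-1}(p-1)-p^n+1$ is the one that agrees with $|\mf{B}_2|=(p-1)^2\sum_{i=1}^{n-1}ip^{i-1}$ as used later in Corollary \ref{cor for S}. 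As written, your proof establishes the corrected formula, not the stated one; you should say so explicitly.
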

\begin{proof}
(1) It's obvious that $ V_1\cap V_2=\{0\}. $ Moreover, if $\un{a}\in I^n$ with $a_i=0$ and $a_j\neq 0,$ then $ D_{ij}(x^{\un{a}})=a_jx^{\un{a}-\epsilon_j}\p_i\in V_1$. Hence, (1) follows.

(2) is obvious.

(3) The first assertion for $\dim(V_1)$ follows from (2). Furthermore, it follows from (1) and \cite[Theorem 3.7, Chapter 4]{SF} that
$$\dim(V_2)=\dim S(n)-\dim(V_1)= (n-1)(p^n-1)- n(p^{n-1}-1)=np^{n-1}(p-1)+1.$$
\end{proof}

The following result is crucial to our final determination of a basis of $S(n)$.

\begin{lemma}\label{v_2}
	Suppose $ \ell(\un{a})=s $ and $ \Omega(\un{a}) =\{i_1,\cdots,i_s \}. $
	Then for any $ k,l\in \Omega(\un{a}),$  $$ D_{kl}(x^{\un{a}}x_kx_l) \in \on{span}_{\bbf}\{D_{i_ji_{j+1}}(x^{\un{a}}x_{i_j}x_{i_{j+1}}) \mid 1\leq j\leq s-1\}. $$
\end{lemma}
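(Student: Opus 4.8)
The plan is to reduce everything to one explicit computation in the free $\fraka(n)$-module $W(n)$ together with a telescoping identity. Recalling from the embedding formulas that $D_{ij}(f)=\p_j(f)\p_i-\p_i(f)\p_j$, I would first compute, for distinct $k,l\in\Omega(\un{a})$, using $\p_l(x^{\un{a}}x_kx_l)=(a_l+1)x^{\un{a}}x_k$ and $\p_k(x^{\un{a}}x_kx_l)=(a_k+1)x^{\un{a}}x_l$, the formula
\[
D_{kl}(x^{\un{a}}x_kx_l)=(a_l+1)\,x^{\un{a}}x_k\p_k-(a_k+1)\,x^{\un{a}}x_l\p_l .
\]
Writing $F_i:=x^{\un{a}}x_i\p_i$ and $c_i:=a_i+1$, this reads $D_{kl}(x^{\un{a}}x_kx_l)=c_lF_k-c_kF_l$, and in particular the proposed spanning elements are $D_{i_ji_{j+1}}(x^{\un{a}}x_{i_j}x_{i_{j+1}})=c_{i_{j+1}}F_{i_j}-c_{i_j}F_{i_{j+1}}$. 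Note that $x^{\un{a}}x_k$ still lies in $\fraka(n)$, since $k\in\Omega(\un{a})$ forces $a_k+1\leq p-1$, so no truncation issue arises.

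The key observation, which is exactly what makes the argument work in characteristic $p$, is that $c_i=a_i+1$ is invertible in $\bbf$ for every $i\in\Omega(\un{a})$: by definition of $\Omega(\un{a})$ one has $a_i\neq p-1$, hence $1\leq c_i\leq p-1$ and $c_i\neq 0$. This legitimizes division by the $c_i$ and reduces the whole statement to a one-dimensional telescoping in the elements $G_j:=F_{i_j}/c_{i_j}$.

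Next I would record the antisymmetry $D_{kl}=-D_{lk}$, immediate from the defining formula, together with $x^{\un{a}}x_kx_l=x^{\un{a}}x_lx_k$, so that $D_{kl}(x^{\un{a}}x_kx_l)=-D_{lk}(x^{\un{a}}x_lx_k)$; the case $k=l$ gives zero. This lets me assume $k=i_u$ and $l=i_v$ with $u<v$. With the notation above, the consecutive elements satisfy
\[
\frac{1}{c_{i_j}c_{i_{j+1}}}\,D_{i_ji_{j+1}}(x^{\un{a}}x_{i_j}x_{i_{j+1}})=G_j-G_{j+1},
\]
while the target satisfies $\frac{1}{c_{i_u}c_{i_v}}D_{i_ui_v}(x^{\un{a}}x_{i_u}x_{i_v})=G_u-G_v$. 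Telescoping $G_u-G_v=\sum_{j=u}^{v-1}(G_j-G_{j+1})$ then yields
\[
D_{i_ui_v}(x^{\un{a}}x_{i_u}x_{i_v})=\sum_{j=u}^{v-1}\frac{c_{i_u}c_{i_v}}{c_{i_j}c_{i_{j+1}}}\,D_{i_ji_{j+1}}(x^{\un{a}}x_{i_j}x_{i_{j+1}}),
\]
which is precisely the asserted membership in the span.

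I expect no serious obstacle here: the content lies entirely in the explicit evaluation of $D_{kl}(x^{\un{a}}x_kx_l)$ and in recognizing the rank-one telescoping once the invertibility of the $c_i$ is in hand. The only point requiring genuine care is confirming $c_i\neq 0$ in $\bbf$, which is exactly where the hypothesis $k,l\in\Omega(\un{a})$ (rather than arbitrary indices) enters; without it the coefficients $1/c_{i_j}$ could fail to exist and the telescoping would break down.
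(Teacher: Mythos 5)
Your proof is correct, and it takes a more explicit route than the paper's. Both arguments begin with the same computation $D_{kl}(x^{\un{a}}x_kx_l)=(a_l+1)x^{\un{a}}x_k\p_k-(a_k+1)x^{\un{a}}x_l\p_l$, which reduces everything to linear algebra among the elements $F_{i_j}=x^{\un{a}}x_{i_j}\p_{i_j}$ with coefficients $c_{i_j}=a_{i_j}+1$. The paper then reduces (without loss of generality) to $k=i_1$, $l=i_s$, asserts that the $s-1$ consecutive elements $D_{i_ji_{j+1}}(x^{\un{a}}x_{i_j}x_{i_{j+1}})$ are linearly independent, and shows that the $s\times s$ matrix obtained by adjoining the coefficient vector of $D_{i_1i_s}(x^{\un{a}}x_{i_1}x_{i_s})$ has vanishing determinant; linear dependence together with independence of the first $s-1$ columns then gives membership in the span. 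You instead divide by the scalars $c_{i_j}$ and exhibit the explicit telescoping identity $D_{i_ui_v}(x^{\un{a}}x_{i_u}x_{i_v})=\sum_{j=u}^{v-1}\frac{c_{i_u}c_{i_v}}{c_{i_j}c_{i_{j+1}}}\,D_{i_ji_{j+1}}(x^{\un{a}}x_{i_j}x_{i_{j+1}})$. Your version buys three things: it produces the coefficients explicitly; it dispenses with both the auxiliary linear-independence claim and the WLOG reduction, since the telescoping identity is a formal identity valid for any $u<v$ regardless of whether the $F_{i_j}$ are independent; and it isolates exactly where the hypothesis $k,l\in\Omega(\un{a})$ enters, namely in the invertibility of the $c_{i_j}$ in characteristic $p$ (and in the nonvanishing of the monomials $x^{\un{a}}x_kx_l$ in the truncated algebra). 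The paper's determinant argument is marginally shorter to state but leaves both the independence claim and the determinant evaluation to the reader.
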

\begin{proof}
Without loss of generality, we can suppose that $ k=i_1,\ l=i_s. $  It is readily shown that $ \{D_{i_ji_{j+1}}(x^{\un{a}}x_{i_j}x_{i_{j+1}}) \mid 1\leq j\leq s-1\} $ are linear independent. Recall that $ D_{kl}(x^{\un{a}}x_kx_l)= x^{\un{a}} ((a_l+1)x_k\p_k-(a_k+1)x_l\p_l). $ Since
	\[
	\left|
	\begin{matrix}
		a_{i_2}+1 & 0 & \cdots & 0 & a_{i_s}+1\\
		-(a_{i_1}+1) & \ddots & \ddots & \vdots & 0\\
		0 &\ddots & \ddots  & \ddots & \vdots\\
		0 & \cdots &  &  a_{i_s}+1 & 0 \\
		0&  \cdots &  & -(a_{i_{s-1}}+1) & -(a_{i_1}+1)
	\end{matrix}
	\right|=0,
	\]
	$D_{kl}(x^{\un{a}}x_kx_l) \in \text{span}_{\bbf}\{D_{i_ji_{j+1}}(x^{\un{a}}x_{i_j}x_{i_{j+1}}) \mid 1\leq j\leq s-1\}.$
\end{proof}

We need the following combination formulas for later use.
\begin{lemma}\label{equation}
The following equalities hold.
\begin{equation*}
	\sum_{s=0}^{n}sC_n^s(p-1)^s=n(p-1)p^{n-1}.
\end{equation*}
\begin{equation*}
\sum_{s=2}^{n}C_n^s(p-1)^{s-2}(s-1)=\sum_{i=1}^{n-1}ip^{i-1}.
\end{equation*}
\end{lemma}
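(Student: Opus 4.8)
The first identity is the standard ``absorption'' computation: writing $sC_n^s=nC_{n-1}^{s-1}$ and factoring out one copy of $(p-1)$, the sum collapses to
$$\sum_{s=0}^{n}sC_n^s(p-1)^s=n(p-1)\sum_{t=0}^{n-1}C_{n-1}^{t}(p-1)^{t}=n(p-1)\bigl(1+(p-1)\bigr)^{n-1}=n(p-1)p^{n-1}$$
by the binomial theorem (equivalently, differentiate $(1+x)^n$, multiply by $x$, and set $x=p-1$). This part is routine and I expect no difficulty.

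For the second identity I plan to argue by induction on $n$, with the base case $n=2$ giving $1=1$ directly. Writing $L_n$ and $R_n$ for the two sides, the recursion $R_n-R_{n-1}=(n-1)p^{n-2}$ is immediate from the definition of $R_n$. For the left-hand side I would apply Pascal's rule $C_n^s=C_{n-1}^{s}+C_{n-1}^{s-1}$ inside $L_n$: the $C_{n-1}^{s}$ piece reproduces $L_{n-1}$ (the $s=n$ term vanishing since $C_{n-1}^{n}=0$), while the $C_{n-1}^{s-1}$ piece, after the reindexing $t=s-1$, becomes $\sum_{t=1}^{n-1}tC_{n-1}^{t}(p-1)^{t-1}$. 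This last sum is precisely the first identity with $n$ replaced by $n-1$ and divided through by $(p-1)$, hence equals $(n-1)p^{n-2}$. Thus $L_n-L_{n-1}=(n-1)p^{n-2}=R_n-R_{n-1}$, which closes the induction.

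The only genuine obstacle is bookkeeping: the right-hand side $\sum ip^{i-1}$ is naturally expressed in powers of $p$, whereas the left-hand side is expressed in powers of $p-1$, so a direct coefficient comparison is awkward. The induction circumvents this by reducing everything, one degree at a time, to the already-established first identity; the cross-term evaluation above is the crux. As an alternative I could instead realize both sides as closed-form rational functions---differentiating $\sum_{s\geq 2}C_n^s x^{s-1}=\bigl((1+x)^n-1-nx\bigr)/x$ and setting $x=p-1$ on the left, and differentiating the geometric sum $\sum_{i=1}^{n-1}x^i$ on the right---and verify that both equal $\bigl((n-1)p^{n}-np^{n-1}+1\bigr)/(p-1)^2$; but the inductive route keeps the change of variable implicit and is cleaner to write down.
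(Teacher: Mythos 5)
Your proof is correct. For the first identity you use the absorption identity $sC_n^s=nC_{n-1}^{s-1}$ together with the binomial theorem; this is just a reshuffling of the paper's argument, which differentiates $\sum_{s=0}^{n}C_n^s(x-1)^{s}=x^n$ and sets $x=p$. For the second identity, however, you take a genuinely different route. The paper differentiates the closed form $\sum_{s=2}^{n}C_n^s(x-1)^{s-1}=\frac{1}{x-1}\bigl(x^n-n(x-1)-1\bigr)=\sum_{i=0}^{n-1}x^i-n$ and evaluates at $x=p$, so both sides of the target identity appear simultaneously after one differentiation; you instead induct on $n$, using Pascal's rule to split $L_n$ into $L_{n-1}$ (the $C_{n-1}^{s}$ piece, with the $s=n$ term vanishing since $C_{n-1}^{n}=0$) plus a cross term that reindexes to $\sum_{t=1}^{n-1}tC_{n-1}^{t}(p-1)^{t-1}$ and is evaluated as $(n-1)p^{n-2}$ by the already-proved first identity with $n-1$ in place of $n$. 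Your induction is self-contained and avoids having to guess the rational closed form, at the cost of slightly more bookkeeping; the paper's one-line calculus argument is shorter but requires recognizing $\sum_{i=0}^{n-1}x^i-n$ in advance. The alternative you sketch at the end --- showing both sides equal $\bigl((n-1)p^{n}-np^{n-1}+1\bigr)/(p-1)^{2}$ --- is essentially the paper's method. All steps check out: the base case $n=2$ gives $1=1$, the telescoping $R_n-R_{n-1}=(n-1)p^{n-2}$ is immediate, and the division by $p-1$ in the cross term is harmless since these are polynomial identities in $p$ (and in any case $p>2$).
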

\begin{proof}
Take derivations on both sides of the following equations respectively,
	$$ \sum_{s=0}^{n}C_n^s(x-1)^{s}
	=  x^n, $$
	$$ \sum_{s=2}^{n}C_n^s(x-1)^{s-1}
	=  \frac{1}{x-1}(x^n-n(x-1)-1)=\sum_{i=0}^{n-1}x^i-n,$$
	and put $ x=p. $ Then the desired equalities follows.
\end{proof}

As a consequence of Lemma \ref{v_2} and Lemma \ref{equation}, we have

\begin{corollary}\label{cor for S}
The subspace $ V_2 $ has a basis $\mf{B}_2$. Consequently, $S(n)$ has a basis $\mf{B}_1\cup\mf{B}_2$.
\end{corollary}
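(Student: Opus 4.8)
The plan is to prove the two assertions in turn, using the standard fact that a spanning set of a finite-dimensional space whose cardinality equals the dimension is automatically a basis. Thus it suffices to show that $\mf{B}_2$ spans $V_2$ and that $|\mf{B}_2|=\dim V_2$; the claim about $S(n)$ will then follow from the decomposition in Lemma \ref{3.1}.

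For spanning, recall that $V_2$ is spanned by the elements $D_{ij}(x^{\un{b}})$ with $b_i,b_j\neq 0$, and that such an element is nonzero precisely when $x^{\un{b}}=x^{\un{a}}x_ix_j$ for some $\un{a}$ with $a_i,a_j\neq p-1$, i.e. with $i,j\in\Omega(\un{a})$. For each fixed $\un{a}$ with $\Omega(\un{a})=\{i_1,\dots,i_s\}$, Lemma \ref{v_2} rewrites every generator $D_{kl}(x^{\un{a}}x_kx_l)$ with $k,l\in\Omega(\un{a})$ as an $\bbf$-linear combination of the consecutive generators $D_{i_ji_{j+1}}(x^{\un{a}}x_{i_j}x_{i_{j+1}})$, $1\leq j\leq s-1$. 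Since the latter are exactly the members of $\mf{B}_2$ attached to $\un{a}$, the set $\mf{B}_2$ spans $V_2$.

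For the count, I would index the members of $\mf{B}_2$ by pairs $(\un{a},j)$ with $\ell(\un{a})=s\geq 2$ and $1\leq j\leq s-1$; sorting by $s$, by the choice of the support $\Omega(\un{a})$ (giving $C_n^s$ possibilities) and by the $s$ ``free'' exponents in $\{0,\dots,p-2\}$ (giving $(p-1)^s$ possibilities) shows that $\mf{B}_2$ has at most $\sum_{s=2}^{n}C_n^s(p-1)^s(s-1)$ elements. Factoring out $(p-1)^2$ and invoking Lemma \ref{equation} evaluates this sum, and one verifies that it agrees with $\dim V_2$ from Lemma \ref{3.1}(3). Because $\mf{B}_2$ already spans $V_2$, the chain $\dim V_2\leq|\mf{B}_2|\leq\sum_{s=2}^{n}C_n^s(p-1)^s(s-1)=\dim V_2$ is forced to consist of equalities, so $\mf{B}_2$ is a basis of $V_2$. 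Combining this with Lemma \ref{3.1}, which gives $S(n)=V_1\oplus V_2$ with $\mf{B}_1$ a basis of $V_1$, and noting that $V_1\cap V_2=\{0\}$ forces $\mf{B}_1\cap\mf{B}_2=\varnothing$, we conclude that $\mf{B}_1\cup\mf{B}_2$ is a basis of $S(n)$.

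The main obstacle is the counting step rather than the spanning step: one must be confident that distinct pairs $(\un{a},j)$ really give distinct, indeed linearly independent, elements, so that the pair-count is not an overcount. This can be seen directly, since elements attached to different $\un{a}$ have disjoint support in the monomial basis $\{x^{\un{c}}\p_m\}$ of $W(n)$, while for a fixed $\un{a}$ the $s-1$ consecutive generators are linearly independent, as recorded in the proof of Lemma \ref{v_2}. With the spanning-plus-cardinality argument this independence is in fact automatic once the combinatorial identity of Lemma \ref{equation} is checked, so the only genuine computation is the evaluation of that sum.
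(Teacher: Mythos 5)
Your proof is correct and follows essentially the same route as the paper: Lemma \ref{v_2} gives that $\mf{B}_2$ spans $V_2$, Lemma \ref{equation} evaluates the count $\sum_{s=2}^{n}C_n^s(p-1)^s(s-1)$, and comparison with $\dim V_2$ from Lemma \ref{3.1}(3) forces $\mf{B}_2$ to be a basis, after which Lemma \ref{3.1}(1)(2) gives the statement for $S(n)$. Your explicit observation that the spanning-plus-cardinality argument makes the linear independence (i.e.\ the absence of overcounting among the pairs $(\un{a},j)$) automatic is left implicit in the paper, but it is the same argument.
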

\begin{proof}
	By Lemma \ref{v_2}, $$ V_2= \sum_{D\in\mf{B}_2}\bbf D=\sum_{s=2}^n\sum_{\stackrel{\un{a}\in I^n}{\Omega(\un{a})=\{ i_1,\cdots,i_s\}}} \sum_{j=1}^{s-1} \bbf D_{i_ji_{j+1}}(x^{\un{a}}x_{i_j}x_{i_{j+1}}). $$
Moreover, it follows from Lemma \ref{3.1}(3) and Lemma \ref{equation} that
$$|\mf{B}_2|=\sum_{s=2}^{n}C_n^s(p-1)^{s}(s-1)=(p-1)^2 \sum_{s=2}^{n}C_n^s (p-1)^{s-2}(s-1)=(p-1)^2\sum_{i=1}^{n-1}ip^{i-1}=\dim(V_2). $$
Hence, $\mf{B}_2$ is a basis of $V_2$. This together with Lemma \ref{3.1}(1)(2) yields the second assertion.
\end{proof}

\subsection{The structure of $S(n)$ as a module over the Witt algebra}
Recall the Lie algebra embedding $\Theta_S: W(1)\inj S(n) $ given by $ \Theta_S(x_1^i\p_1)=D_{12}(x_1^ix_2)=x_1^i\p_1- ix_1^{i-1}x_2\p_2,\ i\in I. $ In this subsection, we study the structure of the special algebra $S(n)$ as a module over the Witt algebra $W(1)$. For that,  for each $ \un{l}=(l_2,\cdots l_n)\in I^{n-1},$ we denote $x^{\un{l}}=x_2^{l_2}\cdots x_n^{l_n}. $

For any $ 2\leq i \leq n$ and $ \un{l}\in I^{n-1}  $ with $ l_i=0, $  let $$ N_i:=\text{span}_{\bbf}\{ x_1^tx_2^{p-1}\cdots x_{i-1}^{p-1}x_{i+1}^{p-1}\cdots x_n^{p-1} \p_i \mid 0\leq t\leq p-2\}$$ and $ N_{\un{l},i}:=\text{span}_{\bbf}\{ x_1^tx^{\un{l}}\p_i\mid t\in I \} $ if $l_j\neq p-1 $ for some $ j\neq i $.
Then both $N_i$ and $N_{\un{l},i}$ are submodules over the Witt algebra. More precisely, we have
\begin{lemma}\label{5.7}
Keep notations as above. Then as modules over the Witt algebra $W(1)$, we have
\begin{enumerate}
		\item For each $ 2\leq i\leq n, $ $ N_i\cong L(p-1). $
		\item Suppose $ \un{l}\in I^{n-1}  $ with $ l_2=0 $ and $ l_j\neq p-1 $ for some $ j\neq 2.$ Then $ N_{\un{l},2}\cong V(0).  $
		\item For each $ 3\leq i \leq n, $ suppose $ \un{l}\in I^{n-1}  $ with $ l_i=0 $ and $ l_j\neq p-1 $ for some $ j\neq i.$ Then $ N_{\un{l},i}\cong V(p-1-l_2).  $
	\end{enumerate}
\end{lemma}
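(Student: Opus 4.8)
The plan is to understand the $W(1)$-action on each space explicitly via the embedding $\Theta_S$, and then match the resulting module against the classification in Subsection \ref{witt rep}. Recall that $\Theta_S(x_1^i\p_1)=x_1^i\p_1-ix_1^{i-1}x_2\p_2$, so the generator $e:=\Theta_S(\p_1)=\p_1$ raises the $x_1$-degree, $h:=\Theta_S(x_1\p_1)=x_1\p_1-x_2\p_2$ records a degree, and $f:=\Theta_S(x_1^2\p_1)=x_1^2\p_1-2x_1x_2\p_2$ lowers it. First I would compute the bracket of these generators against a typical spanning vector $v_t:=x_1^tx^{\un{l}}\p_i$ (or the truncated version for $N_i$). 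Since $\ad$ acts as a derivation on $W(n)$, one gets
\[
[\p_1,\,x_1^tx^{\un{l}}\p_i]=t\,x_1^{t-1}x^{\un{l}}\p_i,
\]
and similarly $[x_1\p_1,\,v_t]$ and $[x_1^2\p_1,\,v_t]$ are computed by the Leibniz rule, with the second term $-ix_1^{i-1}x_2\p_2$ of $\Theta_S$ contributing only when $i=2$ (through the $\p_2$ hitting the $x_2$-factor) and contributing nothing when $i\geq 3$. This case split on whether $i=2$ is exactly what produces the three different isomorphism classes in the statement, so I would organize the whole proof around it.

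For part (1), the key point is that $N_i$ is spanned by the $p-1$ vectors with $x_1$-exponent $t$ ranging over $0\le t\le p-2$ and all other exponents maximal. Here $i\ge 2$, and even in the $i=2$ case the $\p_2$ acting on $x_2^{p-1}$ reproduces a scalar multiple of a vector still inside $N_i$ because the companion factor is already at top degree; so $N_i$ is genuinely closed under the action. I would exhibit the $h$-weights of the $v_t$, check they form the string realizing $L(p-1)$ (a $(p-1)$-dimensional simple module), and confirm closure under $e$ and $f$, in particular that $e$ kills the top vector and $f$ kills the bottom vector so the module does not extend to the full $p$-dimensional baby Verma. For parts (2) and (3), $N_{\un{l},i}$ is $p$-dimensional (all $t\in I$), so it will be a baby Verma module $V(\mu)$ once I identify the weight $\mu$; the weight is read off from the $h$-action on the lowest-degree generator. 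When $i\ge 3$ the $\p_2$-term of $\Theta_S$ is inert on $x^{\un{l}}$ except through its $x_2^{l_2}$-factor, giving $h\cdot v_0=(0\cdot 1 - l_2)\,(\text{sign})$; tracking signs and the degree shift carefully yields the highest weight $p-1-l_2$, matching (3). When $i=2$ the $\p_2$ now differentiates the $\p_2$-slot of $v_t$ itself (not the polynomial coefficient), which shifts the computation so that the highest weight comes out to be $0$, matching (2). In each case I would then verify that $N_{\un{l},i}$ is cyclic, generated by a highest-weight vector annihilated by $e$, so that the universal property of the baby Verma module gives a surjection $V(\mu)\sur N_{\un{l},i}$, which is an isomorphism by dimension count since both have dimension $p$.

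The main obstacle I expect is bookkeeping the effect of the second summand $-ix_1^{i-1}x_2\p_2$ of $\Theta_S$ correctly, and in particular getting the signs and the degree-shift right in the $i=2$ case of part (2), where this term interacts nontrivially with the $\p_2$ appearing in the vector $v_t=x_1^tx^{\un{l}}\p_2$ itself. A subtle point is that $[x_1^{i-1}x_2\p_2,\,x_1^tx^{\un{l}}\p_2]$ produces a cross term from $\p_2$ hitting $x^{\un{l}}$ and another from the bracket of the two $\p_2$'s, and one must confirm that every resulting vector stays within the span of $\{x_1^sx^{\un{l}}\p_2\mid s\in I\}$ rather than leaking into other $\p_j$ components; this closure is what makes $N_{\un{l},2}$ a genuine submodule. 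Once the generators act within each prescribed space and the highest weight is pinned down, identifying the isomorphism type is immediate from the dimension and the recalled structure of $V(\lambda)$ and $L(\lambda)$.
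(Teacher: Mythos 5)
Your proposal follows essentially the same route as the paper: compute $[\Theta_S(x_1^r\p_1),\,x_1^tx^{\un{l}}\p_i]$ explicitly via the Leibniz rule, locate a maximal vector and its weight in each of the three cases ($p-1$, $0$, and $p-1-l_2$ respectively), and conclude by dimension count against $L(p-1)$ and the restricted baby Verma modules. The only blemishes are expository: $\p_1$ lowers rather than raises the $x_1$-degree, and your opening claim that the second summand of $\Theta_S$ contributes nothing when $i\ge 3$ is contradicted (and corrected) by your later, accurate observation that it acts on the $x_2^{l_2}$-factor and produces exactly the $-l_2$ shift needed in case (3).
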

\begin{proof}
	(1) For each $ 2\leq i\leq n$, let $ \un{b}=(p-1,\cdots,p-1)-(p-1)\epsilon_i\in I^{n-1}. $ Then
	$$[\Theta_S(x_1^r\p_1),x_1^tx^{\un{b}} \p_i]=(t+r)x_1^{r+t-1}x^{\un{b}}\p_i,\,\,\forall\,r\in I,\,0\leq t\leq p-2. $$
	Therefore, $N_i$ is a simple $W(1)$-module with a maximal vector $ x_1^{p-2}x^{\un{b}} \p_i $ of weight $ p-1. $ Hence, $ N_i\cong L(p-1). $
	
	(2) Since $ \un{l}\in I^{n-1} $ with $ l_2=0, 	l_i\neq p-1 $ for some $ i\neq 2,
	$ we have
	$$ [\Theta_S(x_1^r\p_1),x_1^tx^{\un{l}} \p_2]= (t+r)x_1^{r+t-1}x^{\un{l}} \p_2,\,\,\forall\,r, t\in I. $$
	This implies that $ N_{\un{l},2}$, as a $W(1)$-module,  has a maximal vector $x_1^{p-1}x^{\un{l}} \p_2$ of weight $0$. Consequently,
$N_{\un{l},2}\cong V(0).  $

	(3)	For each $ i\geq 3, $ if $ \un{l}\in I^{n-1} $ with $ l_i=0,
	l_j\neq p-1 $ for some $ j\neq i, $ then
	$$[\Theta_S(x_1^r\p_1),x_1^tx^{\un{l}} \p_i ]=(t-rl_2)x_1^{r+t-1}x^{\un{l}} \p_i,\,\,\forall\,r, t\in I. $$
This implies that $ N_{\un{l},i}$  as a $W(1)$-module,  has a maximal vector $x_1^{p-1}x^{\un{l}} \p_i$ of weight $p-1-l_2$. Consequently, $ N_{\un{l},i}\cong V(p-1-l_2).  $ \qedhere
\end{proof}

For each $ 1\leq i<j\leq n\text{ and } \un{a}\in I^{n-1},  $ let $ M_{\un{a},ij}:=\text{span}_{\bbf}\{ D_{ij}(x_1^tx^{\un{a}})\mid t\in I \}.  $
\begin{lemma}\label{5.8}
	Suppose $ \un{l}=(l_2,\cdots,l_n)\in I^{n-1} $ with $\ell(\un{l})=s$ and $ \Omega(\un{l})=\{ i_1,\cdots , i_s \}\subseteq\{2,\cdots,n\}. $
Then as modules over the Witt algebra $W(1)$, we have
	\begin{enumerate}
		\item For each $ 1\leq j\leq s-1, $ $ M_{\un{l}+\epsilon_{i_j}+\epsilon_{i_{j+1}},i_ji_{j+1}}\cong
		V(p-1-l_2).  $
		\item If $ i_1=2, $ then $ M_{\un{l}+\epsilon_{2},12}\cong V(p-2-l_2). $
		\item If $ i_1> 2, $ then $ M_{\un{l}+\epsilon_{i_1},1i_1}\cong V(p-1). $
	\end{enumerate}
\end{lemma}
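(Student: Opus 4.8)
The plan is to compute the adjoint action of $W(1)$ on each $M_{\un{a},ij}$ by hand, using that $\Theta_S(x_1^r\p_1)$ is the explicit derivation $\delta_r:=x_1^r\p_1-rx_1^{r-1}x_2\p_2$ of $\fraka(n)$, so that the action is nothing but the commutator of vector fields. Writing $D_{ij}(g)=\p_j(g)\p_i-\p_i(g)\p_j$ and using the elementary rule $[\delta_r,h\p_m]=\delta_r(h)\p_m-h\,\p_m(\delta_r)$, where $\p_m(\delta_r)$ is the field obtained by applying $\p_m$ to the coefficients of $\delta_r$, one gets a closed formula for $[\delta_r,D_{ij}(x_1^tx^{\un{a}})]$. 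The one computation that drives everything is $\delta_r(x_1^cx^{\un{m}})=(c-rm_2)\,x_1^{r+c-1}x^{\un{m}}$, where $m_2$ is the $x_2$-exponent of $x^{\un{m}}$; it is this formula that makes each bracket collapse onto a single shifted monomial.

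The uniform outcome I expect is that in every case
\[
[\Theta_S(x_1^r\p_1),\,D_{ij}(x_1^tx^{\un{a}})]=\kappa(r,t)\,D_{ij}(x_1^{r+t-1}x^{\un{a}}),
\]
with the right-hand side read as $0$ when $r+t-1\geq p$. Granting this, $M_{\un{a},ij}$ is a $W(1)$-submodule; it is $p$-dimensional, since each $D_{ij}(x_1^tx^{\un{a}})$ with $t\in I$ is nonzero (one of its $\p_i$- or $\p_1$-components has a coefficient of the form $(\ast+1)x_1^t\cdots$ with $\ast+1\not\equiv 0$) and these have distinct $x_1$-degrees. The vector at $t=p-1$ is maximal, since raising it by any $\Theta_S(x_1^r\p_1)$ with $r\geq 2$ produces $x_1^{r+p-2}=0$, while repeated application of $\p_1=\Theta_S(\p_1)$ recovers all lower $t$; thus $M_{\un{a},ij}$ is cyclic on this vector. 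A $p$-dimensional cyclic highest weight module of weight $\lambda$ is a quotient of the $p$-dimensional baby Verma $V(\lambda)$, hence equals it, so $M_{\un{a},ij}\cong V(\lambda)$ with $\lambda=\kappa(1,p-1)$, exactly as in the proof of Lemma \ref{5.7}.

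It remains to evaluate $\kappa(r,t)$, and this is where the work lies. In case (1) both indices lie in $\{2,\dots,n\}$: with $g=x_1^tx^{\un{a}}$, when neither equals $2$ the field $\delta_r$ commutes with $\p_{i_j}$ and $\p_{i_{j+1}}$ and the bracket is simply $D_{i_j i_{j+1}}(\delta_r g)$, giving $\kappa(r,t)=t-rl_2$; when one index is $2$, a single correction $rx_1^{r-1}D_{2b}(g)=rD_{2b}(x_1^{r+t-1}x^{\un{a}})$ appears and recombines to the same value $t-rl_2$, whence $\lambda=p-1-l_2$. Cases (2) and (3) are genuinely more delicate, because the index $1$ is now present and $\p_1(\delta_r)$ carries the extra component $-r(r-1)x_1^{r-2}x_2\p_2$, which spawns terms outside the naive pattern. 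In case (3), where $i_1>2$ forces $2\notin\Omega(\un{l})$ and hence $l_2=p-1$, this spurious $\p_2$-term acquires a factor $x_2\cdot x_2^{p-1}=x_2^{p}=0$ and drops out, leaving $\kappa(r,t)=t$ and $\lambda=p-1$. Case (2), with $i_1=2$, is the main obstacle: here both the $\p_1$- and the $\p_2$-parts of the bracket survive, and one must verify that they reassemble into a single multiple of $D_{12}(x_1^{r+t-1}x^{\un{a}})$. Concretely this comes down to a degree-two polynomial identity in $r$ and $t$, matching the computed $\p_2$-coefficient against $-(t-r(l_2+1))(r+t-1)$; once this is checked one reads off $\kappa(r,t)=t-r(l_2+1)$ and $\lambda=p-2-l_2$. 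This case-(2) identity, rather than any conceptual point, is the step I expect to demand the most care.
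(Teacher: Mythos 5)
Your proposal is correct and follows essentially the same route as the paper: compute the adjoint action of $\Theta_S(x_1^r\p_1)$ explicitly to see that each bracket is a scalar multiple $\kappa(r,t)$ of $D_{ij}(x_1^{r+t-1}x^{\un{a}})$, identify the maximal vector at $t=p-1$ and its weight ($p-1-l_2$, $p-2-l_2$, $p-1$ in the three cases, exactly as in the paper), and conclude $M_{\un{a},ij}\cong V(\lambda)$ by the dimension count $\dim M_{\un{a},ij}=p=\dim V(\lambda)$. The only difference is that you spell out the cyclicity and nonvanishing arguments that the paper leaves implicit.
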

\begin{proof}
	(1) For each $ 1\leq j\leq s-1 $ and $ r,t\in I, $
	\[ [\Theta_S(x_1^r\p_1),D_{i_ji_{j+1}}(x_1^tx^{\un{l}}x_{i_j}x_{i_{j+1}})]=(t-rl_2)D_{i_ji_{j+1}}(x_1^{r+t-1}x^{\un{l}}x_{i_j}x_{i_{j+1}}).  \]
	This implies that $M_{\un{l}+\epsilon_{i_j}+\epsilon_{i_{j+1}},i_ji_{j+1}}$,  as a $W(1)$-module,  has a maximal vector
$D_{i_ji_{j+1}}(x_1^{p-1}x^{\un{l}}x_{i_j}x_{i_{j+1}})$ of weight $p-1-l_2$. Hence,  $ M_{\un{l}+\epsilon_{i_j}+\epsilon_{i_{j+1}},i_ji_{j+1}}\cong
		V(p-1-l_2).  $

	(2) For each $ r,t\in I, $ we have
	$$[\Theta_S(x_1^r\p_1),D_{12}(x_1^tx^{\un{l}+\epsilon_{2}})]=(t-rl_2-r)D_{ij}(x_1^{r+t-1}x^{\un{l}+\epsilon_{2}}). $$
This implies that $ M_{\un{l}+\epsilon_{2},12}$, as a $W(1)$-module,  has a maximal vector $D_{12}(x_1^{p-1}x^{\un{l}+\epsilon_{2}})$
of weight $p-2-l_2$. Hence, $ M_{\un{l}+\epsilon_{2},12}\cong V(p-2-l_2). $

	(3) Since $ i_1> 2, $ $ l_2=p-1. $ For each $ r,t\in I, $ we have
	$$ [\Theta_S(x_1^r\p_1),D_{1i_1}(x_1^tx^{\un{l}+\epsilon_{i_1}})] =tD_{1i_1}(x_1^{r+t-1}x^{\un{l}+\epsilon_{i_1}}). $$
This implies that  $ M_{\un{l}+\epsilon_{i_1},1i_1}$,  as a $W(1)$-module,  has a maximal vector $D_{12}(x_1^{p-1}x^{\un{l}+\epsilon_{i_1}})$ of weight $p-1$. Hence, $ M_{\un{l}+\epsilon_{i_1},1i_1}\cong V(p-1). $
\end{proof}

We are now in the position to present the following main result on the decomposition of $S(n)$ as a direct sum of $W(1)$-modules.

\begin{theorem}\label{thm for S}
As a $W(1)$-module, we have
$$\aligned S(n)&\cong\, V(0)^{\oplus\big((n-1)(p^{n-2}-1)+1\big)}\oplus V(p-1)^{\oplus\big((n-1)p^{n-2}-1\big)}\\
&\quad\,\oplus\Big(\bigoplus_{i=1}^{p-2}L(i)^{\oplus(n-1)p^{n-2}}\Big)\oplus L(p-1)^{\oplus(n-1)}.\endaligned$$
\end{theorem}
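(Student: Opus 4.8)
The plan is to exhibit $S(n)$ as an internal direct sum of the $W(1)$-submodules constructed in Lemmas \ref{5.7} and \ref{5.8} and then to tally their isomorphism types, which those lemmas already record. Thus the proof splits into two parts: verifying that the listed submodules decompose $S(n)$ as a direct sum, and counting the multiplicity of each type.

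For the decomposition I would use the explicit basis $\mf{B}_1\cup\mf{B}_2$ of Corollary \ref{cor for S} and show it is partitioned (up to nonzero scalars) by the submodules $N_i$, $N_{\un{l},i}$ ($2\le i\le n$) and $M_{\un{a},ij}$. The allocation is: $N_i$ and $N_{\un{l},i}$ take exactly the vectors $x^{\un{a}}\p_i\in\mf{B}_1$ with $i\ge 2$; the submodules of Lemma \ref{5.8}(1), whose index pairs lie in $\{2,\dots,n\}$, take exactly the vectors $D_{i_ji_{j+1}}(x^{\un{a}}x_{i_j}x_{i_{j+1}})\in\mf{B}_2$ with $i_j\ge 2$; and the submodules of Lemma \ref{5.8}(2),(3) absorb what remains, namely the vectors $x^{\un{l}}\p_1\in\mf{B}_1$ (appearing at $t=0$, since $D_{1m}(x^{\un{l}+\epsilon_m})=(l_m+1)x^{\un{l}}\p_1$) together with the $\mf{B}_2$-vectors with index pair $(1,m)$ (appearing at $t=1,\dots,p-1$ after rewriting $x_1^tx^{\un{l}+\epsilon_m}=x^{\un{b}}x_1x_m$ with $b_1=t-1$ and $(b_2,\dots,b_n)=\un{l}$). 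Here $m=\min\Omega(\un{l})$, and case (2) or (3) of Lemma \ref{5.8} applies according as $m=2$ or $m>2$. Tracking these identifications shows each basis vector lies in exactly one summand, so the sum is direct and fills $S(n)$.

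I expect this bookkeeping to be the main obstacle, because---unlike the directions $\p_i$ with $i\ge 2$---the $\p_1$-direction vectors do not span a $W(1)$-submodule: the image $\Theta_S(x_1^r\p_1)$ carries a $\p_2$-component, so it entangles the vectors $x^{\un{a}}\p_1$ with the divergence-free fields $D_{1m}(\cdot)$ of $\mf{B}_2$, and the only correct grouping is into the modules $M_{\un{l}+\epsilon_m,1m}$. A convenient way to rule out leakage between the proposed summands is the quantity $\mu(x^{\un{a}}\p_i)=a_2-\delta_{i,2}$: one checks on the generators $x_1^r\p_1$ that $\mu$ is preserved by the $\Theta_S(W(1))$-action and that it is constant on each candidate submodule.

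For the count I separate the types. The modules $N_i$ contribute $L(p-1)$ with multiplicity $n-1$; every other summand is a $V(\lambda)$ whose $\lambda$ is dictated by the exponent $l_2$ through the formulas of Lemmas \ref{5.7} and \ref{5.8}. Summing over the index sets and applying Lemma \ref{equation}, one finds for each $1\le\lambda\le p-2$ that the contributions of $N_{\un{l},i}$ $(i\ge 3)$, of Lemma \ref{5.8}(1), and of Lemma \ref{5.8}(2) total $(n-2)p^{n-3}+(n-2)(p-1)p^{n-3}+p^{n-2}=(n-1)p^{n-2}$; the analogous computations, with care for the boundary term $s=0$ and the exclusions built into $\mf{B}_1$, give multiplicity $(n-1)p^{n-2}-1$ for $V(p-1)$ and $(n-1)(p^{n-2}-1)+1$ for $V(0)$. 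Finally, using $V(\lambda)=L(\lambda)$ for $0<\lambda<p-1$ while retaining $V(0)$ and $V(p-1)$ as baby Verma modules assembles the claimed decomposition. As a check, the total dimension $(n-1)(p-1)+p\big((n-1)p^{n-1}-(n-1)\big)=(n-1)(p^n-1)$ agrees with $\dim S(n)$.
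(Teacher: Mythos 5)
Your proposal is correct and follows essentially the same route as the paper: the authors likewise split the basis $\mf{B}_1\cup\mf{B}_2$ of Corollary \ref{cor for S} into the $\p_i$-direction vectors with $i\ge 2$ (grouped into the $N_i$ and $N_{\un{l},i}$ of Lemma \ref{5.7}) and the remaining vectors $V_{11}+V_2$ (grouped into the $M_{\un{a},ij}$ of Lemma \ref{5.8}, with the $\p_1$-direction vectors absorbed exactly as you describe via $D_{1m}(x^{\un{l}+\epsilon_m})=(l_m+1)x^{\un{l}}\p_1$), and then count multiplicities with Lemma \ref{equation}. Your invariant $\mu$ is an extra consistency check not present in the paper, but the decomposition and the resulting tallies $(n-1)(p^{n-2}-1)+1$, $(n-1)p^{n-2}-1$, $(n-1)p^{n-2}$, and $n-1$ coincide with theirs.
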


\begin{proof}
Set
$$ V_{11}:=\text{span}_{\bbf}\{ x^{\un{a}}\p_1\mid \un{a}\in I^n\text{ with } a_1=0 \,\text{and}\, a_j\neq p-1 \text{ for some }j\neq 1 \},$$
$$ V_{12}:=\text{span}_{\bbf}\{  x^{\un{a}}\p_i\mid 2\leq i\leq n, \,\un{a}\in I^n \text{ with } a_i=0\text{ and } a_j\neq p-1 \text{ for some }j\neq i \}. $$
Then  $ V_1=V_{11}\ds V_{12} $ as a vector space. Moreover, both $ V_{12} $ and $ V_{11}+V_2 $ are $W(1)$-modules, and $S(n)= V_{12}\oplus (V_{11}+V_2)$.
It follows from Lemma \ref{5.7} that $ V_{12}=U_1\ds U_2 \ds U_3 \ds U_4, $ where
	\[ U_1= \bds_{i=2}^n N_i\cong L(p-1)^{\oplus (n-1)}, \quad U_2= \bds_{\substack{\un{l}\in I^{n-1},\, l_2=0 \\ \un{l}\neq (0,p-1,\cdots,p-1)}} N_{\un{l},2}\cong V(0)^{\oplus (p^{n-2}-1)}, \]
	\[ U_3= \bds_{i=3}^n \bds_{j=0}^{p-2} \bds_{\substack{\un{l}\in I^{n-1}\\ l_i=0, \, l_2=j}} N_{\un{l},i}\cong \bds_{i=3}^n \bds_{j=0}^{p-2} \bds_{\substack{\un{l}\in I^{n-1}\\ l_i=0, \, l_2=j}} V(p-1-j) \cong \bigoplus_{i=1}^{p-1}V(i)^{\oplus (n-2)p^{n-3}},\]
	\[ U_4 = \bds_{i=3}^n \bds_{\substack{\un{l}\in I^{n-1}\\ l_i=0, \, l_2=p-1\\ \un{l}\neq \tau-(p-1)\epsilon_i}} N_{\un{l},i}\cong \bds_{i=3}^n \bds_{\substack{\un{l}\in I^{n-1}\\ l_i=0, \,l_2=p-1\\ \un{l}\neq \tau-(p-1)\epsilon_i}} V(0)\cong V(0)^{\oplus (n-2)(p^{n-3}-1)}. \]
Since $V(i)=L(i)$ for $1\leq i\leq p-1$ (see \S \ref{witt rep}), it follows that
\begin{eqnarray}\label{L_1}
V_{12}&\cong& V(0)^{\oplus((n-2)(p^{n-3}-1)+ p^{n-2}-1)}\oplus\Big(\bigoplus_{i=1}^{p-2}L(i)^{\oplus (n-2)p^{n-3}}\Big)\\ \cr
&&\oplus L(p-1)^{\oplus (n-1)}
\oplus V(p-1)^{\oplus (n-2)p^{n-3}}. \nonumber
\end{eqnarray}
It follows from Corollary \ref{cor for S} and Lemma \ref{5.8} that $ V_{11}+V_2=W_1\ds W_2\ds W_3 \ds W_4$,  where
	\[ W_1=\bds_{s=2}^{n-1} \bds_{\substack{\un{b}\in I^{n-1},\, i_1=2\\E(\un{b})=\{ i_1,\cdots,i_s \}}} \bds_{j=1}^{s-1} M_{\un{b}+\epsilon_{i_j} +\epsilon_{i_{j+1}},i_ji_{j+1}}, \quad W_2= \bds_{s=2}^{n-2} \bds_{\substack{\un{b}\in I^{n-1},\,i_1>2\\E(\un{b})=\{ i_1,\cdots,i_s \}}} \bds_{j=1}^{s-1} M_{\un{b}+\epsilon_{i_j} +\epsilon_{i_{j+1}},i_ji_{j+1}}, \]
	\[W_3= \bds_{s=1}^{n-1} \bds_{\substack{\un{b}\in I^{n-1},\,i_1=2\\E(\un{b})=\{ i_1,\cdots,i_s \}}} M_{\un{b}+\epsilon_{2} ,12}, \quad W_4= \bds_{s=1}^{n-2} \bds_{\substack{\un{b}\in I^{n-1},\,i_1>2\\E(\un{b})=\{ i_1,\cdots,i_s \}}} M_{\un{b}+\epsilon_{i_1} ,1i_1}. \]
Thanks to Lemma \ref{equation}, we have
$$\sum_{s=2}^{n-1} (s-1) C_{n-2}^{s-1} (p-1)^{s-1} = (n-2)(p-1)p^{n-3},$$
and
$$\sum_{s=2}^{n-2} (s-1)C_{n-2}^{s}(p-1)^{s}=(n-2)(p-1)p^{n-3}- (p^{n-2}-1).$$
Moreover, It follows from Lemma \ref{5.8} that
$$\displaystyle W_1\cong \bds_{i=1}^{p-1}\bds_{s=2}^{n-1} V(i)^{\oplus (s-1) C_{n-2}^{s-1} (p-1)^{s-1}}=\bds_{i=1}^{p-1} V(i)^{\oplus (n-2)(p-1)p^{n-3}}, $$
\[ W_2\cong \bds_{s=2}^{n-2} V(0)^{\oplus (s-1)C_{n-2}^{s}(p-1)^{s}}=V(0)^{\oplus ((n-2)(p-1)p^{n-3}- p^{n-2}+1)}, \]
\[ W_3\cong \bds_{i=0}^{p-2}V(i)^{\oplus(\bds_{s=1}^{n-1} C_{n-2}^{s-1}(p-1)^{s-1})}=\bds_{i=0}^{p-2}  V(i)^{\oplus p^{n-2}}, \]
\[ W_4\cong \bds_{s=1}^{n-2} V(p-1)^{\oplus C_{n-2}^{s}(p-1)^{s}}= V(p-1)^{\oplus (p^{n-2}-1)}. \]
Since $V(i)=L(i)$ for $1\leq i\leq p-1$ (see \S \ref{witt rep}), it follows that
\begin{eqnarray}\label{L_2}
V_{11}+V_2&\cong& V(0)^{\oplus((n-2)(p-1)p^{n-3}+1)}\oplus\Big(\bigoplus_{i=1}^{p-2}L(i)^{\oplus ((n-2)(p-1)p^{n-3}+p^{n-2})}\Big)\cr
&&\oplus V(p-1)^{\oplus ((n-2)(p-1)p^{n-3}+p^{n-2}-1)}.
\end{eqnarray}
Now the desired assertion follows from (\ref{L_1}) and (\ref{L_2}).
\end{proof}

As a  consequence of Theorem \ref{thm for S}, we further have

\begin{corollary}
As a module over the Witt algebra $W(1)$,
$$ [S(n)]=(2(n-1)p^{n-2}-n+1)[L(0)]+\sum_{i=1}^{p-2}(n-1)p^{n-2} [L(i)]+2(n-1)p^{n-2}[L(p-1)].$$
\end{corollary}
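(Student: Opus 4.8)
The plan is to derive this composition-factor formula directly from the explicit $W(1)$-module decomposition of $S(n)$ established in Theorem \ref{thm for S}, using only the known composition series of the restricted baby Verma modules $V(\lambda)$ recalled in \S\ref{witt rep}. Since the assignment $M\mapsto[M]$ is additive over direct sums, the whole argument reduces to replacing each summand of the decomposition by its class in the Grothendieck group and then collecting terms.

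First I would record the three relevant facts from \S\ref{witt rep}: for $1\le i\le p-2$ one has $V(i)=L(i)$, so $[V(i)]=[L(i)]$; while both $V(0)$ and $V(p-1)$ have exactly the two composition factors $L(0)$ and $L(p-1)$, so $[V(0)]=[V(p-1)]=[L(0)]+[L(p-1)]$. Applying $[\,\cdot\,]$ to the isomorphism of Theorem \ref{thm for S} then gives
\begin{align*}
[S(n)] &= \big((n-1)(p^{n-2}-1)+1\big)[V(0)] + \big((n-1)p^{n-2}-1\big)[V(p-1)] \\
&\quad + \sum_{i=1}^{p-2}(n-1)p^{n-2}[L(i)] + (n-1)[L(p-1)],
\end{align*}
and substituting the three facts above expresses everything in terms of the simple classes $[L(i)]$.

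It then remains only to gather coefficients. The classes $[L(i)]$ for $1\le i\le p-2$ receive contributions solely from the $L(i)$-summands, so their coefficient is $(n-1)p^{n-2}$. The coefficient of $[L(0)]$ is $\big((n-1)(p^{n-2}-1)+1\big)+\big((n-1)p^{n-2}-1\big)$, which simplifies to $2(n-1)p^{n-2}-n+1$. The coefficient of $[L(p-1)]$ is the same two contributions together with the extra $(n-1)$ coming from the $L(p-1)^{\oplus(n-1)}$ summand, giving $2(n-1)p^{n-2}$. These match the claimed formula.

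Since the only inputs are Theorem \ref{thm for S} and the composition series of $V(0)$ and $V(p-1)$, there is no genuine obstacle here; the one point demanding care is the arithmetic simplification of the $[L(0)]$-coefficient, where the $+1$ from the $V(0)$-multiplicity and the $-1$ from the $V(p-1)$-multiplicity must be tracked correctly so that they cancel the $-(n-1)$ produced when $(n-1)(p^{n-2}-1)$ is expanded.
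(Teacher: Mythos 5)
Your proposal is correct and matches the paper's own (one-line) proof: both simply apply $[\,\cdot\,]$ to the decomposition in Theorem \ref{thm for S} and use $[V(0)]=[V(p-1)]=[L(0)]+[L(p-1)]$ together with $[V(i)]=[L(i)]$ for $1\le i\le p-2$. Your coefficient bookkeeping, in particular $\big((n-1)(p^{n-2}-1)+1\big)+\big((n-1)p^{n-2}-1\big)=2(n-1)p^{n-2}-n+1$ for $[L(0)]$ and the extra $(n-1)$ for $[L(p-1)]$, is exactly right.
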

\begin{proof}
Since $[V(0)]=[V(p-1)]=[L(0)]+[L(p-1)]$, the assertion follows directly from Theorem \ref{thm for S}.
\end{proof}

\section{Structure of the Hamiltonian algebras as modules over the Witt algebra}
Recall the Lie algebra embedding $\Theta_H: W(1)\inj H(2r) $ given by $ \Theta_H(x_1^i\p_1)=-D_H(x_1^ix_{r+1})=x_1^i\partial_1-ix_1^{i-1}x_{r+1}\partial_{r+1}, i\in I. $ In this section, we study the structure of the  Hamiltonian algebra $H(2r)$ as a module over the Witt algebra $W(1)$.

We first investigate the case $H(2)$. Set
\[ H_j=
\begin{cases}
\text{span}_{\bbf}\{D_H(x_1^ix_2^j)\mid 1\leq i\leq p-1\}, &\on{ if } j=0,\cr
\text{span}_{\bbf}\{D_H(x_1^ix_2^j)\mid i\in I\}, & \on{ if } 1\leq j\leq p-2,\cr
\text{span}_{\bbf}\{D_H(x_1^ix_2^j)\mid 0\leq i\leq p-2\}, &\on{ if } j=p-1.
\end{cases}
\]
Since
\begin{equation}\label{h2-bracket}
[\Theta_H(x_1^s\p_1), D_H(x_1^ix_{2}^j)]= (i-sj)D_H(x_1^{s+i-1}x_{2}^j),\,\forall\, i,j,s\in I,
\end{equation}
each $H_j$ is a $W(1)$-module for any $j\in I$. Moreover, we have the following decomposition of $H(2)$ as a $W(1)$-module.
\begin{lemma}\label{h2}
As a $W(1)$-module, $H(2)$ is completely reducible and
$$H(2)\cong \Big(\bigoplus_{i=1}^{p-2}L(i)\Big)\oplus L(p-1)^{\oplus 2}.$$
In particular,
\begin{equation}\label{6.1}
	 [H(2)]=\sum_{j=1}^{p-2}[L(j)]+2[L(p-1)].
\end{equation}
\end{lemma}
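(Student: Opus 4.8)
The plan is to split $H(2)$ into the submodules $H_j$ and to identify each one separately. First I would note that the spanning set $\{D_H(x_1^ix_2^j)\mid (i,j)\in I\times I,\ (i,j)\ne(0,0),(p-1,p-1)\}$ is in fact linearly independent, hence a basis of $H(2)$ (a one-line check: the $\p_2$-component of $D_H(x_1^ix_2^j)$ already pins down the coefficients with $i\ge 1$, and the $\p_1$-component handles the remaining $i=0$ terms). Sorting this basis by the exponent $j$ shows $H(2)=\bigoplus_{j\in I}H_j$ as vector spaces, and since (\ref{h2-bracket}) guarantees each $H_j$ is a $W(1)$-submodule, this is a decomposition of $W(1)$-modules. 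It then suffices to determine each $H_j$; complete reducibility and the character formula (\ref{6.1}) will be immediate once every $H_j$ is shown to be simple. I would also record the harmless fact that the only way (\ref{h2-bracket}) can produce the excluded top monomial $x^\tau$ is when $s+i-1=p-1$ and $j=p-1$, in which case the coefficient $i-sj$ is $\equiv 0\pmod p$, so nothing outside the spanning set is ever generated.

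For the generic range $1\le j\le p-2$ I would argue exactly as in Lemma \ref{5.7} and Lemma \ref{5.8}. Taking $s=1$ in (\ref{h2-bracket}), the toral element $\Theta_H(x_1\p_1)$ scales $D_H(x_1^ix_2^j)$ by $i-j$, so these are weight vectors and the top one $D_H(x_1^{p-1}x_2^j)$ has weight $p-1-j$. For $s\ge 2$ (spanning the action of $\ggg_1$) formula (\ref{h2-bracket}) sends $D_H(x_1^{p-1}x_2^j)$ into a multiple of $D_H(x_1^{s+p-2}x_2^j)=0$, since $s+p-2\ge p$; hence $D_H(x_1^{p-1}x_2^j)$ is a maximal vector of weight $p-1-j$. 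Because $\Theta_H(\p_1)$ sends $D_H(x_1^ix_2^j)$ to $i\,D_H(x_1^{i-1}x_2^j)$, repeatedly applying it to the maximal vector recovers the whole basis of $H_j$, so $H_j$ is a cyclic highest weight module and hence a quotient of $V(p-1-j)$. The equality $\dim H_j=p=\dim V(p-1-j)$ then forces $H_j\cong V(p-1-j)=L(p-1-j)$, the last step because $1\le p-1-j\le p-2$.

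I expect the boundary values $j=0$ and $j=p-1$ to be the only real obstacle, modest as it is: deleting one index drops $\dim H_j$ from $p$ to $p-1$, so the module must be identified as a simple module rather than a baby Verma module. For $j=0$ the basis is $D_H(x_1^i)=i\,x_1^{i-1}\p_2$ with $1\le i\le p-1$, and its top vector $D_H(x_1^{p-1})$ has weight $p-1$ and is killed by $\ggg_1$. For $j=p-1$ the basis is $D_H(x_1^ix_2^{p-1})$ with $0\le i\le p-2$, and the top vector $D_H(x_1^{p-2}x_2^{p-1})$ again has weight $p-1$; here verifying annihilation by $\ggg_1$ uses precisely the coefficient cancellation at $x^\tau$ noted above. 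In both cases $H_j$ is a cyclic highest weight module of highest weight $p-1$, hence a quotient of $V(p-1)$, and since $\dim H_j=p-1<p=\dim V(p-1)$ it must be the simple quotient $L(p-1)$.

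Assembling the pieces gives $H(2)\cong L(p-1)^{\oplus 2}\oplus\bigoplus_{j=1}^{p-2}L(p-1-j)$, and reindexing $i=p-1-j$ over $1\le j\le p-2$ turns this into the asserted $\bigoplus_{i=1}^{p-2}L(i)\oplus L(p-1)^{\oplus 2}$. Every summand is simple, so $H(2)$ is completely reducible, and reading off the composition factors yields (\ref{6.1}).
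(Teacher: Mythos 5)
Your proof is correct and follows essentially the same route as the paper: decompose $H(2)=\bigoplus_{j\in I}H_j$ via (\ref{h2-bracket}), exhibit the maximal vector in each $H_j$, and identify the summand by comparing dimensions with $V(p-1-j)$, treating $j=0$ and $j=p-1$ separately as the $(p-1)$-dimensional simple quotients $L(p-1)$. Your additional checks (cyclicity under $\Theta_H(\p_1)$ and the coefficient cancellation at the excluded monomial $x^\tau$ when $j=p-1$) are details the paper leaves implicit, and they are verified correctly.
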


\begin{proof}
For any $1\leq j\leq p-2$, it follows from (\ref{h2-bracket}) that $H_j$ has a maximal vector $x_1^{p-1}x_2^j$ of weight $p-1-j$. Since $H_j$ is $p$-dimensional, $H_j\cong V(p-1-j)\cong L(p-1-j)$ as  $W(1)$-modules. Furthermore, it again follows from (\ref{h2-bracket}) that $H_0$ has a maximal vector $x_1^{p-1}$ of weight $p-1$, and $H_{p-1}$ has a maximal vector $x_1^{p-2}x_2^{p-1}$ of weight $p-1$. And $\dim H_0=\dim H_{p-1}=p-1$, it follows that $H_0\cong H_{p-1}\cong L(p-1)$  as  $W(1)$-modules. Since $H(2)=\oplus_{j=0}^{p-1}H_j$, the desired assertion follows.
\end{proof}

\begin{remark}
(\ref{6.1}) was obtained in \cite[Lemma 2.6]{HS}.
\end{remark}

In general, for $r>1$, $j\in I$, $\un{l}=( l_2,\cdots, l_r, l_{r+2},\cdots, l_{2r})\in I^{2r-2},$ let $x^{\un{l}}:=x_2^{l_2}\cdots x_r^{l_r}x_{r+2}^{l_{r+2}}\cdots x_{2r}^{l_{2r}}$,
$$H_{j,\un{l}}:=\text{span}_{\bbf}\{ D_H(x_1^ix_{r+1}^jx^{\un{l}}) \mid i\in I \text{ such that } (i,j,\un{l})\neq (0,\cdots,0), (p-1,\cdots,p-1) \},$$
and
$$ H_{\un{l}}:=\text{span}_{\bbf}\{ D_H(x_1^ix_{r+1}^kx^{\un{l}}) \mid i,k\in I \text{ such that } (i,k,\un{l})\neq (0,\cdots,0), (p-1,\cdots,p-1) \}. $$
Since
\begin{equation}\label{h-bracket}
[\Theta_H(x_1^s\p_1), D_H(x_1^ix_{r+1}^jx^{\un{l}})]= (i-sj)D_H(x_1^{s+i-1}x_{r+1}^jx^{\un{l}}),\,\forall\, i,j,s\in I, \un{l}\in I^{n-2},
\end{equation}
both $H_{t,\un{l}}$ and $H_{\un{l}} $ are $W(1)$-modules for any $t\in I, \un{l}\in I^{2r-2}$. The following result describes the $W(1)$-module structure on $ H_{\un{l}} $.

\begin{lemma}\label{h-lem}
	Keep notations as above, then the following decompositions hold as $W(1)$-modules.
	\begin{enumerate}
		\item If $ \un{l}=(0,\cdots,0) $, then
$$H_{\un{l}}\cong
V(0)\oplus\Big(\bigoplus_{i=1}^{p-1} L(i)\Big).
$$
\item If  $ \un{l}=(p-1,\cdots,p-1), $ then
$$H_{\un{l}}\cong
\Big(\bigoplus_{i=1}^{p-1} L(i)\Big)\oplus V(p-1).
$$
\item  If $ \un{l}\neq(0,\cdots,0) $ and $ (p-1,\cdots,p-1), $ then
$$H_{\un{l}}\cong
V(0)\oplus\Big(\bigoplus_{i=1}^{p-2} L(i)\Big)\oplus V(p-1).
$$
	\end{enumerate}
\end{lemma}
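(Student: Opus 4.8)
The plan is to refine the given spanning set into a direct sum indexed by the exponent of $x_{r+1}$. Since the adjoint action in (\ref{h-bracket}) of $\Theta_H(x_1^s\p_1)$ on $D_H(x_1^ix_{r+1}^kx^{\un{l}})$ changes only the $x_1$-exponent $i$ and leaves both $k$ and $\un{l}$ fixed, I would first observe that $H_{\un{l}}$ splits as a $W(1)$-module into $H_{\un{l}}=\bigoplus_{k\in I}H_{k,\un{l}}$, where $H_{k,\un{l}}$ is spanned by the admissible $D_H(x_1^ix_{r+1}^kx^{\un{l}})$. The key bookkeeping point is that the two excluded monomials $1$ and $x^{\tau}$ are exactly the two members of the monomial set annihilated by $D_H$ (as $\ker D_H=\bbf$), so an exclusion removes one basis vector from a single block only: the monomial $1$ lives in the $k=0$ block and occurs only when $\un{l}=(0,\cdots,0)$, while $x^{\tau}$ lives in the $k=p-1$ block and occurs only when $\un{l}=(p-1,\cdots,p-1)$.

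Next I would treat the \emph{generic} blocks, in which all $p$ values $i\in I$ survive. Reading off (\ref{h-bracket}) exactly as in the prototype Lemma \ref{h2}, the element with $i=p-1$ is a maximal vector (each raising operator $\Theta_H(x_1^s\p_1)$, $s\ge 2$, sends $x_1^{p-1}$ to $x_1^{s+p-2}=0$), its $\Theta_H(x_1\p_1)$-weight is $p-1-k$, and the block is $p$-dimensional, hence $\cong V(p-1-k)$. Thus the block $k=0$ contributes $V(p-1)$, each block with $1\le k\le p-2$ contributes $V(p-1-k)=L(p-1-k)$ (simple by \S\ref{witt rep}), and the block $k=p-1$ contributes $V(0)$; letting $k$ range over $1,\dots,p-2$ produces the common summand $\bigoplus_{i=1}^{p-2}L(i)$ appearing in all three cases.

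The delicate point, and the only source of divergence between the three cases, is the two possible truncated blocks of dimension $p-1$. For $\un{l}=(0,\cdots,0)$ the block $H_{0,\un{l}}$ loses the vector from $1$, and for $\un{l}=(p-1,\cdots,p-1)$ the block $H_{p-1,\un{l}}$ loses the vector from $x^{\tau}$. In each I would locate the surviving maximal vector (namely $D_H(x_1^{p-1})$ when $k=0$, and $D_H(x_1^{p-2}x_{r+1}^{p-1}x^{\un{l}})$ when $k=p-1$), verify via (\ref{h-bracket}) that it has weight $p-1$ and that it cyclically generates the whole $(p-1)$-dimensional block under the lowering operator $\Theta_H(\p_1)$. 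Being a cyclic highest weight module of weight $p-1$, the block is a quotient of $V(p-1)$; since $V(p-1)$ is $p$-dimensional with composition factors $L(p-1)$ and $L(0)$ (recalled in \S\ref{witt rep}), its unique $(p-1)$-dimensional quotient is $L(p-1)$, so each truncated block is $\cong L(p-1)$. This identification, distinguishing a simple summand from a baby Verma summand, is the main obstacle. Assembling the blocks then finishes the proof: in case (1) the $k=0$ block is the truncated $L(p-1)$ and $k=p-1$ gives $V(0)$, so $H_{\un{l}}\cong V(0)\oplus\bigoplus_{i=1}^{p-1}L(i)$; in case (2) the $k=p-1$ block is the truncated $L(p-1)$ and $k=0$ gives $V(p-1)$, so $H_{\un{l}}\cong\bigoplus_{i=1}^{p-1}L(i)\oplus V(p-1)$; and in case (3) neither block is truncated, so $H_{\un{l}}\cong V(0)\oplus\bigoplus_{i=1}^{p-2}L(i)\oplus V(p-1)$.
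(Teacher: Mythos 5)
Your proof takes essentially the same route as the paper's: split $H_{\un{l}}$ into the blocks $H_{k,\un{l}}$ indexed by the $x_{r+1}$-exponent, read off the maximal vector and its weight from (\ref{h-bracket}), and distinguish the $p$-dimensional blocks ($\cong V(p-1-k)$) from the two possible $(p-1)$-dimensional truncated blocks ($\cong L(p-1)$); the paper writes this out only for $\un{l}=(0,\dots,0)$ and declares the other cases analogous, while you carry out all three. One small inaccuracy in a parenthetical: $x^{\tau}$ is \emph{not} annihilated by $D_H$ (only $\ker D_H=\bbf$ holds); the tuple $(p-1,\dots,p-1)$ is excluded because $H(2r)$ is by definition spanned by $D_H(x^{\al})$ with $0\prec\al\prec\tau$ --- but since the exclusion is built into the definition of $H_{\un{l}}$, your bookkeeping and conclusions are unaffected.
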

\begin{proof}
Note that $H_{\un{l}}=\bigoplus_{j\in I}H_{j,\un{l}}$. We need to determine the $W(1)$-module structure of $H_{t,\un{l}}$ for any $t\in I$.
We just show the assertion for the case $ \un{l}=(0,\cdots,0) $. Similar arguments yield the assertion for the other cases.

In the following, we always assume that $ \un{l}=(0,\cdots,0) $. For each $0\leq j\leq p-1$, it follows from (\ref{h-bracket}) that $ H_{j,\un{l}}$, as a $W(1)$-module,  contains a maximal vector $D_H(x_1^{p-1}x_{r+1}^jx^{\un{l}})$ of weight $p-1-j$. Since $\dim H_{j,\un{l}}=p$ for $1\leq j\leq p-1$, $ H_{j,\un{l}}\cong V(p-1-j)$ as a $W(1)$-module. While $\dim H_{0,\un{l}}=p-1$, we have $H_{0,\un{l}}\cong L(p-1)$. Hence,
\[H_{\un{l}}=\bigoplus_{j\in I}H_{j,\un{l}}\cong L(p-1)\oplus\Big(\bigoplus_{j=1}^{p-1} V(p-1-j)\Big)\cong V(0)\oplus\Big(\bigoplus_{i=1}^{p-1} L(i)\Big).\qedhere\]
\end{proof}

We are now in the position to present the following main result on the structure of the Hamiltonian algebra $H(2r)$ as a module over the Witt algebra $W(1)$.
\begin{theorem}
As a module over the Witt algebra $W(1)$, we have
\begin{equation*}\label{h-decomp}
H(2r)\cong V(0)^{\oplus(p^{2r-2}-1)}\oplus \Big(\bigoplus_{i=1}^{p-2}L(i)^{\oplus p^{2r-2}}\Big)\oplus V(p-1)^{\oplus(p^{2r-2}-1)}\oplus L(p-1)^{\oplus 2}.
\end{equation*}	
In particular,
$$ [H(2r)]=(2p^{2r-2}-2) [L(0)]+p^{2r-2}\sum_{j=1}^{p-2}[L(j)]+2p^{2r-2}[L(p-1)]. $$
\end{theorem}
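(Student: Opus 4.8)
The plan is to reduce the global statement to the local computation already carried out in Lemma~\ref{h-lem}, by splitting $H(2r)$ as a direct sum of the submodules $H_{\un{l}}$ and then summing multiplicities over all $\un{l}\in I^{2r-2}$. The case $r=1$ is not covered by Lemma~\ref{h-lem} but is exactly Lemma~\ref{h2}, which upon setting $p^{2r-2}=p^0=1$ gives $H(2)\cong\bigoplus_{i=1}^{p-2}L(i)\oplus L(p-1)^{\oplus 2}$, matching the asserted formula; so I assume $r>1$ from now on.

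First I would establish the $W(1)$-module decomposition
\[
H(2r)=\bigoplus_{\un{l}\in I^{2r-2}}H_{\un{l}}.
\]
Recall that $\{D_H(x^{\al})\mid 0\prec\al\prec\tau\}$ is a basis of $H(2r)$, and since $D_H(f)=0$ exactly when $f$ is constant, distinct admissible monomials yield linearly independent images. Grouping the basis monomials $x_1^i x_{r+1}^k x^{\un{l}}$ according to the transverse exponent vector $\un{l}=(l_2,\dots,l_r,l_{r+2},\dots,l_{2r})$ produces the vector-space direct sum above, and by~(\ref{h-bracket}) bracketing with $\Theta_H(x_1^s\p_1)$ alters only the $x_1$-exponent, leaving $x_{r+1}^k x^{\un{l}}$ untouched; hence each $H_{\un{l}}$ is $W(1)$-stable. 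The only subtlety is at the two boundary indices: for $\un{l}=(0,\dots,0)$ the excluded monomial $\al=0$ removes $(i,k)=(0,0)$, and for $\un{l}=(p-1,\dots,p-1)$ the excluded monomial $\al=\tau$ removes $(i,k)=(p-1,p-1)$; for all other $\un{l}$ nothing is excluded. These are precisely the three clauses of Lemma~\ref{h-lem}, where in each boundary case exactly one baby Verma summand degenerates to $L(p-1)$ ($V(p-1)$ for the all-zero index, $V(0)$ for the all-$(p-1)$ index), because one slice $H_{j,\un{l}}$ drops from dimension $p$ to $p-1$.

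Next I would tally multiplicities. Among the $p^{2r-2}$ indices, exactly one is $(0,\dots,0)$, one is $(p-1,\dots,p-1)$, and the remaining $p^{2r-2}-2$ are generic. Summing the three outcomes of Lemma~\ref{h-lem}: $V(0)$ occurs $1+(p^{2r-2}-2)=p^{2r-2}-1$ times (all-zero plus generic), $V(p-1)$ occurs $1+(p^{2r-2}-2)=p^{2r-2}-1$ times (all-$(p-1)$ plus generic), each $L(i)$ with $1\le i\le p-2$ occurs $1+1+(p^{2r-2}-2)=p^{2r-2}$ times (all three types contribute), and $L(p-1)$ occurs exactly twice, coming only from the two boundary indices since the generic summand stops at $L(p-2)$. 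Assembling these gives the claimed isomorphism.

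Finally, the composition-factor identity follows from this decomposition together with $[V(0)]=[V(p-1)]=[L(0)]+[L(p-1)]$ (see \S\ref{witt rep}): the $2(p^{2r-2}-1)$ copies of $V(0)$ and $V(p-1)$ each contribute one $[L(0)]$ and one $[L(p-1)]$, and adding the two copies of $L(p-1)$ and the $p^{2r-2}$ copies of each $L(i)$ ($1\le i\le p-2$) yields $(2p^{2r-2}-2)[L(0)]+p^{2r-2}\sum_{j=1}^{p-2}[L(j)]+2p^{2r-2}[L(p-1)]$. The only genuinely delicate point in the whole argument is the careful bookkeeping at the two boundary indices, where the exclusion of $\al=0$ and $\al=\tau$ converts one $V(p-1)$ (resp.\ $V(0)$) into $L(p-1)$; everything else is routine combinatorics.
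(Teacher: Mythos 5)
Your proof is correct and follows essentially the same route as the paper's: reduce to Lemma~\ref{h2} for $r=1$ and, for $r>1$, to the decomposition $H(2r)=\bigoplus_{\un{l}\in I^{2r-2}}H_{\un{l}}$ combined with Lemma~\ref{h-lem}. The only difference is that you make explicit the multiplicity count over the one all-zero, one all-$(p-1)$, and $p^{2r-2}-2$ generic indices $\un{l}$, which the paper's proof leaves implicit.
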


\begin{proof}
For $r=1$, the assertion follows from Lemma \ref{h2}. While for $r>1$, $H(2r)=\bigoplus_{\un{l}\in I^{2r-2}}H_{\un{l}}$, then the desired assertion follows directly from Lemma \ref{h-lem}.
\end{proof}

\section{Structure of the contact algebras as modules over the Witt algebra}
Recall the Lie algebra embedding $\Theta_K: W(1)\inj K(2r+1) $ given by $ \Theta_K(x_1^i\p_1)=\frac{1}{2}D_K(x_{2r+1}^i)=\sum_{j=1}^{2r}\frac{i}{2}x_{2r+1}^{i-1}x_j\partial_j+x_{2r+1}^{i}\partial_{2r+1}, i\in I. $ In this section, we study the structure of the  contact algebra $K(2r+1)$ as a module over the Witt algebra $W(1)$.

For $i\in I$, set $$\Gamma_i:=\left\{\un{l}=(l_1,\cdots l_{2r})\in I^{2r}\left|\, \frac{1}{2}\Big(\sum\limits_{j=1}^{2r} l_j-4\Big)\equiv i\,(\text{mod }p) \right. \right\}.$$
For arbitrary $i\in I$ and  $ (l_1,\cdots,l_{2r-1})\in I^{2r-1}, $  there is a unique $ l_{2r}\in I $ such that $$ l_{2r}\equiv 2i+4-\sum_{j=1}^{2r-1} l_j \,(\on{mod }p). $$ Hence,  $|\Gamma_i|=p^{2r-1}$ for any $i\in I$.

For any $ \un{l}=(l_1,\cdots l_{2r})\in I^{2r},$ we denote $x^{\un{l}}:=x_1^{l_1}\cdots x_{2r}^{l_{2r}} $ and
$$K_{\un{l}}:=\begin{cases}
\text{span}_{\bbf}\{x^{\un{l}}x_{2r+1}^t\mid 0\leq t\leq p-2\}, &\text{if } 2r+4\equiv 0\,(\text{mod }p)\text{ and } \un{l}=\tau-(p-1)\epsilon_{2r+1},\cr
\text{span}_{\bbf}\{x^{\un{l}}x_{2r+1}^t\mid 0\leq t\leq p-1\}, & \text{otherwise.}
\end{cases}
$$
Since
\begin{equation}\label{eq1 in K}
[\Theta_K(x_1^i\p_1),\, x^{\un{l}}x_{2r+1}^t]=\frac{1}{2}\Big(i\Big(\sum_{j=1}^{2r} l_j-2\Big)+2t\Big)x^{\un{l}}x_{2r+1}^{t+i-1},\,\forall\,i,t\in I,\, \un{l}\in I^{2r},
\end{equation}
$K_{\un{l}}$ is a $W(1)$-module for any $\un{l}\in I^{2r}$. More precisely, we have

\begin{lemma}\label{k lem}
Keep notations as above, then for any $ \un{l}=(l_1,\cdots l_{2r})\in I^{2r},$ as a $W(1)$-module
$$K_{\un{l}}\cong\begin{cases}
L(p-1), &\text{if } 2r+4\equiv 0\,(\text{\rm mod }p)\text{ and } \un{l}=\tau-(p-1)\epsilon_{2r+1},\cr
V\big(\frac{1}{2}\big(\sum_{j=1}^{2r} l_j\big)-2\big), & \text{otherwise.}
\end{cases}
$$
\end{lemma}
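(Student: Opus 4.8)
The plan is to compute the action of $\Theta_K(x_1^i\partial_1)$ on each spanning vector $x^{\un{l}}x_{2r+1}^t$ directly from equation (\ref{eq1 in K}), which already encapsulates the entire bracket computation; the lemma is then a matter of reading off weights and dimensions. Writing $m=\frac{1}{2}\big(\sum_{j=1}^{2r}l_j\big)-2$, the coefficient in (\ref{eq1 in K}) is $\frac{1}{2}\big(i(\sum l_j-2)+2t\big)=i\,m+t$ after substituting $\sum l_j-2 = 2m+2$ and simplifying; hence
\begin{equation*}
[\Theta_K(x_1^i\partial_1),\, x^{\un{l}}x_{2r+1}^t]=(im+t)\,x^{\un{l}}x_{2r+1}^{t+i-1},\quad\forall\,i,t\in I.
\end{equation*}
This is exactly the weight-space pattern of the spanning set $\{x_1^t\partial_1\}$ inside a baby Verma module, where $x^{\un{l}}x_{2r+1}^t$ plays the role of a vector of weight $m+t$ (I would identify the correspondence by comparing with the action on $V(\lambda)$ recalled in \S\ref{witt rep}).

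First I would treat the generic case, where $K_{\un{l}}$ is $p$-dimensional with basis $\{x^{\un{l}}x_{2r+1}^t\mid 0\leq t\leq p-1\}$. Taking $i=r+1$ in a highest-weight calculation, or more directly observing that the vector $v_0:=x^{\un{l}}x_{2r+1}^{p-1}$ satisfies $\Theta_K(\ggg_{[\geq 1]})\cdot v_0=0$ and carries weight $m$ under $\Theta_K(\ggg_{[0]})$, I would show $v_0$ is a maximal vector of weight $m$. Since $\dim K_{\un{l}}=p=\dim V(m)$ and the baby Verma module $V(m)$ is the universal object generated by a maximal vector of weight $m$, the surjection $V(m)\twoheadrightarrow K_{\un{l}}$ induced by $1_\lambda\mapsto v_0$ is an isomorphism by dimension count. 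This yields $K_{\un{l}}\cong V(m)=V\big(\frac{1}{2}(\sum l_j)-2\big)$, matching the second branch.

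Second I would handle the degenerate case $2r+4\equiv 0\pmod p$ with $\un{l}=\tau-(p-1)\epsilon_{2r+1}$, where by the definition of $K_{\un{l}}$ the index $t=p-1$ is omitted, so $K_{\un{l}}$ is $(p-1)$-dimensional with basis $\{x^{\un{l}}x_{2r+1}^t\mid 0\leq t\leq p-2\}$. Here $\sum_{j=1}^{2r}l_j=(p-1)\cdot 2r$, and the congruence $2r+4\equiv 0$ forces $m=\frac{1}{2}((p-1)2r)-2=-r-2\equiv p-1\pmod p$ after using $2r\equiv -4$. The coefficient $(im+t)$ now reads $(i(p-1)+t)\equiv(t-i)\pmod p$, so the top vector $x^{\un{l}}x_{2r+1}^{p-2}$ is maximal of weight $p-1$; I would verify that the $(p-1)$-dimensional module so generated is simple (no further maximal vector appears, the weights $t-i$ cycling through the nonzero residues), and since $\dim L(p-1)=p-1$ this gives $K_{\un{l}}\cong L(p-1)$, the first branch.

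The main obstacle is bookkeeping the degenerate branch correctly: I must confirm that the omission of the $t=p-1$ basis vector is forced by $D_K$ (the missing derivation being central or vanishing under the contact structure), that the resulting weight reduces to $p-1$ precisely under the stated congruence, and that the truncated module is genuinely $L(p-1)$ rather than $V(p-1)$ — i.e. that the would-be highest-weight extension $V(p-1)$ has exactly the composition structure forcing its $(p-1)$-dimensional quotient, consistent with the fact recorded in \S\ref{witt rep} that $V(p-1)$ has composition factors $L(0)$ and $L(p-1)$. Once the weight $m$ is matched to the correct residue and the dimension is pinned down, both isomorphisms follow from the universal property of baby Verma modules together with a dimension count, with no further computation required.
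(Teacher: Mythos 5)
Your strategy coincides with the paper's: exhibit a maximal vector from the bracket formula (\ref{eq1 in K}), identify its weight, and conclude by a dimension count against $V(m)$ in the generic case and against the $(p-1)$-dimensional quotient $L(p-1)$ of $V(p-1)$ in the degenerate case. The outline is sound and the final isomorphisms you assert are correct, but your intermediate arithmetic does not actually produce them. Writing $m=\frac{1}{2}\big(\sum_j l_j\big)-2$ gives $\sum_j l_j-2=2m+2$, so the coefficient in (\ref{eq1 in K}) is $\frac{1}{2}\big(i(2m+2)+2t\big)=i(m+1)+t$, not $im+t$. Under your formula the toral element $x_1\partial_1$ (the case $i=1$) would assign $v_0=x^{\un{l}}x_{2r+1}^{p-1}$ the weight $m+(p-1)\equiv m-1$, contradicting your claim that $v_0$ has weight $m$; with the correct coefficient the weight is $(m+1)+(p-1)\equiv m$, as required. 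Likewise, in the degenerate case $2r\equiv -4$ gives $m\equiv -r-2\equiv 0$, not $p-1$; the top vector $x^{\un{l}}x_{2r+1}^{p-2}$ has weight $i(m+1)+t$ at $i=1$, $t=p-2$, namely $1+(p-2)=p-1$, which is how the asserted weight arises (your expression $t-i$ would give $p-3$). So both weight computations must be redone before the universal-property argument can be invoked.

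Two further points should be made explicit. First, in both cases the top vector generates all of $K_{\un{l}}$, since $\Theta_K(\partial_1)$ sends $x^{\un{l}}x_{2r+1}^t$ to $t\,x^{\un{l}}x_{2r+1}^{t-1}$ with nonvanishing coefficient for $1\leq t\leq p-1$; this is what makes the induced map $V(\cdot)\to K_{\un{l}}$ surjective so that the dimension count applies. Second, in the degenerate case maximality of $x^{\un{l}}x_{2r+1}^{p-2}$ requires checking $i=2$, where the target monomial $x^{\un{l}}x_{2r+1}^{p-1}$ has been omitted from $K_{\un{l}}$; the bracket nevertheless vanishes because its coefficient is $(\sum_j l_j-2)+(p-2)\equiv -2r-4\equiv 0\pmod p$. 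You do not, however, need to verify simplicity of the degenerate module by inspecting all weights: a $(p-1)$-dimensional quotient of $V(p-1)$ generated by the image of the maximal vector must be $L(p-1)$, since $V(p-1)$ has composition factors $L(0)$ and $L(p-1)$ with $\dim L(p-1)=p-1$, exactly the shortcut you mention at the end. The omission of $t=p-1$ is part of the definition of $K_{\un{l}}$ in the paper and needs no justification inside this lemma.
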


\begin{proof}
If $2r+4\equiv 0\,(\text{mod }p)\text{ and } \un{l}=\tau-(p-1)\epsilon_{2r+1}$, it follows from (\ref{eq1 in K}) that  $x^{\un{l}}x_{2r+1}^{p-2}$ is a maximal vector of weight $p-1$ in $K_{\un{l}}$, so that $K_{\un{l}}$, as a $W(1)$-module, is a homomorphic image of $V(p-1)$. Furthermore, since $\dim K_{\un{l}}=p-1$, it follows that $K_{\un{l}}\cong L(p-1)$.

If $2r+4\not\equiv 0\,(\text{mod }p)\text{ or } \un{l}=\tau-(p-1)\epsilon_{2r+1}$, it follows from (\ref{eq1 in K}) that  $x^{\un{l}}x_{2r+1}^{p-1}$ is a maximal vector of weight $\frac{1}{2}\big(\sum_{j=1}^{2r} l_j\big)-2$ in $K_{\un{l}}$. This together with the dimension of $K_{\un{l}}$ yields that  $K_{\un{l}}\cong V\big(\frac{1}{2}\big(\sum_{j=1}^{2r} l_j\big)-2\big).$
\end{proof}

As a consequence of Lemma \ref{k lem}, we have the following main result on the decomposition of the contact algebra $K(2r+1)$ as a module over the Witt algebra $W(1)$.

\begin{theorem}
As a $W(1)$-module, we have
$$K(2r+1)\cong\begin{cases}
V(0)^{\oplus (p^{2r-1}-1)}\oplus V(p-1)^{\oplus p^{2r-1}}\oplus\Big(\bigoplus\limits_{i=1}^{p-2} L(i)^{\oplus p^{2r-1}}\Big)\oplus L(p-1), &\text{if } 2r+4\equiv 0\,(\text{\rm mod }p),\cr
V(0)^{\oplus p^{2r-1}}\oplus V(p-1)^{\oplus p^{2r-1}}\oplus\Big(\bigoplus\limits_{i=1}^{p-2} L(i)^{\oplus p^{2r-1}}\Big), & \text{if } 2r+4\not\equiv 0\,(\text{\rm mod }p).
\end{cases}
$$
Consequently,
$$[K(2r+1)]=\begin{cases}
(2p^{2r-1}-1)[L(0)]+\sum\limits_{i=1}^{p-2}p^{2r-1}[L(i)]+2p^{2r-1}[L(p-1)], &\text{if } 2r+4\equiv 0\,(\text{\rm mod }p),\cr
2p^{2r-1}[L(0)]+\sum\limits_{i=1}^{p-2}p^{2r-1}[L(i)]+2p^{2r-1}[L(p-1)], & \text{if } 2r+4\not\equiv 0\,(\text{\rm mod }p).
\end{cases}
$$
\end{theorem}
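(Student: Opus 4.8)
The plan is to realize $K(2r+1)$ as a direct sum of the submodules $K_{\un{l}}$ and then read off the multiplicities from the counting sets $\Gamma_i$. First I would identify $K(2r+1)$, through the contact operator $D_K$, with a space of truncated polynomials: since $D_K$ is injective on $\fraka(2r+1)$ and $\widetilde{K}(2r+1)=D_K(\fraka(2r+1))$, the derived algebra $K(2r+1)=[\widetilde{K}(2r+1),\widetilde{K}(2r+1)]$ corresponds to all of $\fraka(2r+1)$ when $2r+4\not\equiv 0\,(\text{mod }p)$, and to the codimension-one subspace obtained by deleting the top monomial $x^{\tau}=x_1^{p-1}\cdots x_{2r+1}^{p-1}$ when $2r+4\equiv 0\,(\text{mod }p)$. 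Under this identification the deleted monomial is exactly $x^{\un{l}}x_{2r+1}^{p-1}$ with $\un{l}=\tau-(p-1)\epsilon_{2r+1}$, which is why the definition of $K_{\un{l}}$ drops the exponent $t=p-1$ precisely in that exceptional case. Formula (\ref{eq1 in K}) then shows that $\Theta_K(x_1^i\p_1)$ sends $x^{\un{l}}x_{2r+1}^{t}$ to a multiple of $x^{\un{l}}x_{2r+1}^{t+i-1}$, so the action preserves the index $\un{l}$; consequently each $K_{\un{l}}$ is a $W(1)$-submodule and
$$K(2r+1)=\bigoplus_{\un{l}\in I^{2r}}K_{\un{l}}$$
as $W(1)$-modules.

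Next I would invoke Lemma \ref{k lem} to identify each summand. For $\un{l}$ outside the exceptional case one has $K_{\un{l}}\cong V\big(\tfrac{1}{2}(\sum_{j}l_j)-2\big)$, and by the definition of $\Gamma_i$ the weight $\tfrac{1}{2}(\sum_{j}l_j)-2$ is congruent to $i$ modulo $p$ exactly when $\un{l}\in\Gamma_i$. Since $|\Gamma_i|=p^{2r-1}$ for every $i\in I$, each $V(i)$ occurs with multiplicity $p^{2r-1}$ in the generic case $2r+4\not\equiv 0\,(\text{mod }p)$, giving
$$K(2r+1)\cong\bigoplus_{i\in I}V(i)^{\oplus p^{2r-1}};$$
the stated form then follows from $V(i)=L(i)$ for $1\le i\le p-2$ (see \S\ref{witt rep}).

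For the exceptional case $2r+4\equiv 0\,(\text{mod }p)$ I would locate the single aberrant summand. A direct computation gives $\sum_{j=1}^{2r}l_j=2r(p-1)\equiv -2r\,(\text{mod }p)$ for $\un{l}=\tau-(p-1)\epsilon_{2r+1}$, so its nominal weight is $-r-2$, which is $\equiv 0\,(\text{mod }p)$ precisely when $2r+4\equiv 0$; that is, the exceptional $\un{l}$ lies in $\Gamma_0$. By Lemma \ref{k lem} this summand is $L(p-1)$ rather than $V(0)$, so the weight-$0$ bucket contributes $V(0)^{\oplus(p^{2r-1}-1)}\oplus L(p-1)$ while all other buckets are unchanged, which yields the first case of the theorem. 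Finally, the two composition-factor formulas follow at once from $[V(0)]=[V(p-1)]=[L(0)]+[L(p-1)]$.

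The main obstacle is the first step: justifying the decomposition $K(2r+1)=\bigoplus_{\un{l}}K_{\un{l}}$, which rests on the injectivity of $D_K$ and on correctly identifying the single monomial $x^{\tau}$ that is lost when passing to the derived algebra in the case $2r+4\equiv 0\,(\text{mod }p)$. Everything after that is bookkeeping with the sets $\Gamma_i$ and the known module structure of $V(i)$ recalled in \S\ref{witt rep}.
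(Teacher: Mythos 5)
Your proposal follows essentially the same route as the paper: decompose $K(2r+1)=\bigoplus_{\un{l}\in I^{2r}}K_{\un{l}}$, identify each summand via Lemma \ref{k lem}, count multiplicities with $|\Gamma_i|=p^{2r-1}$, and locate the exceptional summand $L(p-1)$ in the $\Gamma_0$ bucket when $2r+4\equiv 0\,(\mathrm{mod}\ p)$. The extra detail you supply on the identification of $K(2r+1)$ with truncated polynomials via $D_K$ and on the missing top monomial is a correct elaboration of what the paper leaves implicit, not a different argument.
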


\begin{proof}
Note that $K(2r+1)=\bigoplus_{\un{l}\in I^{2r}} K_{\un{l}}$,  It follows from Lemma \ref{k lem} that
$$K(2r+1)\cong\begin{cases}
V(0)^{\oplus (|\Gamma_0|-1)}\oplus V(p-1)^{\oplus |\Gamma_{p-1}|}\oplus\Big(\bigoplus\limits_{i=1}^{p-2} L(i)^{\oplus |\Gamma_{i}|}\Big)\oplus L(p-1), &\text{if } 2r+4\equiv 0\,(\text{\rm mod }p),\cr
V(0)^{\oplus |\Gamma_0|}\oplus V(p-1)^{\oplus |\Gamma_{p-1}|}\oplus\Big(\bigoplus\limits_{i=1}^{p-2} L(i)^{\oplus |\Gamma_{i}|}\Big), & \text{if } 2r+4\not\equiv 0\,(\text{\rm mod }p).
\end{cases}
$$
Since $|\Gamma_{i}|=p^{2r-1}$ for any $i\in I$, the first assertion holds. Moreover, since $$[V(0)]=[V(p-1)]=[L(0)]+[L(p-1)],$$ the second assertion follows.
\end{proof}

\textbf{Acknowledgment}
We would like to thank Prof. A. Premet for helpful discussion on the basis of $S(n)$.

\end{document}